\newcommand{\id}{\textsf{id}}
\newcommand{\comma}{\ensuremath{\downarrow}}
\newcommand{\cat}[1]{\ensuremath{\mathcal{#1}}}
\newcommand{\intcat}[1]{\mathscr{#1}}
\newcommand{\catname}[1]{\mathsf{#1}}
\newcommand{\Set}{\catname{Set}}
\newcommand{\Cat}{\catname{Cat}}
\newcommand{\Span}{\catname{Span}}
\newcommand{\Fam}{\catname{Fam}}
\newcommand{\Desc}{\catname{Desc}}
\newcommand{\Top}{\catname{Top}}
\newcommand{\Mod}{\catname{Mod}}
\newcommand{\Alg}{\catname{Alg}}
\newcommand{\iso}{\cong}
\newcommand{\adj}{\dashv}
\newcommand{\dash}{\text{-}}
\DeclareMathOperator{\PsEq}{\mathsf{PsEq}}
\DeclareMathOperator{\ob}{\catname{ob}}
\DeclareMathOperator{\word}{\catname{W}}
\newtheorem{lemma}{Lemma}[section]
\newtheorem{proposition}[lemma]{Proposition}
\newtheorem{corollary}[lemma]{Corollary}
\newtheorem{theorem}[lemma]{Theorem}
\theoremstyle{definition}
\newtheorem{remark}[lemma]{Remark}
\title[Descent for multicategory functors]
{Descent for internal multicategory functors}
\date{April 26, 2021}
\author[Rui Prezado]{Rui Prezado} %1
\address[1,2]{University of Coimbra, CMUC, Department of Mathematics,
Portugal}
\email[1]{rui.prezado@student.uc.pt}
\author[Fernando Lucatelli Nunes]{Fernando Lucatelli Nunes} %2
\address[2]{Utrecht University, The Netherlands}
\email[2]{f.lucatellinunes@uu.nl}
\thanks{The first author was supported by the grant PD/BD/150461/2019 funded
by Fundação para a Ciência e Tecnologia (FCT). Both authors were supported by
the Centre for Mathematics of the University of Coimbra - UIDB/00324/2020,
funded by the Portuguese Government through FCT/MCTES.  This work was also
supported through the programme ``Oberwolfach Leibniz Fellows'' by the
Mathematisches Forschungsinstitut Oberwolfach in 2022.}
\keywords{effective descent morphisms, Grothendieck descent theory,
internal $T$-multicategory, iso-inserter, coherence}
\subjclass{18M65, 18F20, 18N10, 18C15}
\begin{document}

  \begin{abstract}
    We give sufficient conditions for effective descent in categories of
(generalized) internal multicategories. Two approaches to study effective
descent morphisms are pursued. The first one relies on establishing the
category of internal multicategories as an equalizer of categories of diagrams.
The second approach extends the techniques developed by Ivan Le Creurer in his
study of descent for internal essentially algebraic structures.

  \end{abstract}

  \maketitle

  \tableofcontents

  \section*{Introduction}
    \label{Introduction} 
    Let \( \cat B \) be a category and \(p \colon x \to y \) a morphism in \( \cat
B \) such that pullbacks along \(p\) exist. We say that \( p \) is an
\textit{effective descent \emph(descent\emph)} morphism whenever the
change-of-base functor
\begin{equation*}
  p^\ast \colon \cat B \comma y \to \cat B \comma x
\end{equation*}
is monadic (premonadic). The main subject of this note, the study of
effective descent morphisms, is at the core of Grothendieck Descent Theory
(see \textit{e.g.} \cite{JT94, JST04}) and its applications (see, for
instance, \cite{BR01}). 

Except for the case of locally cartesian closed categories, the full
characterization of effective descent morphisms is far from trivial in
general.  The topological descent case is the main example of such a
challenging problem (see the characterization in \cite{RT94} and the
reformulation in \cite{CH02}).

The notion of \( (T,\cat V) \)-categories, introduced in \cite{CT03},
generalizes both enriched categories and various notions of \textit{spaces}.
By studying effective descent morphisms in categories of \( (T,\cat V)
\)-categories, Clementino and Hofmann were able to give further descent
results and understanding in various contexts, including, for instance, the
reinterpretation of the topological results mentioned above and many other
interesting connections (see, for instance, \cite{CH04, CJ11, CH12, CH17}).

On one hand, since they were mainly concerned with topological results, their
study focused on the case where \(\cat V\) is a quantale, and there is no
obvious way to generalize their approach to more general monoidal categories
\(\cat V\). On the other hand, their work, together with the characterization
of effective descent morphisms for the category of internal categories (see
\cite[Section~6]{JST04} and \cite{Cre99}), have raised interest in further
studying effective descent morphisms in categories of \textit{generalized}
categorical structures.

With this in mind, \cite[Lemma~9.10]{Luc18} showed that we can embed the
category of \(\cat V\)-enriched categories (with \(\cat V\) lextensive) in the
category of internal categories in \(\cat V\). From this embedding,
\cite[Theorem~1.6]{Luc18} provides sufficient conditions for effective descent
morphisms in \(\cat V\)-categories.  However, the literature still lacks
results for \((T,\cat V)\)-categories for a non-trivial \(T\) and an extensive
\(\cat V\).

The present note is part of a project which aims to study descent and
Janelidze-Galois theory within the realm of generalized multicategories and
other categorical structures. The first aim of this project consists of
studying effective descent morphisms in categories of generalized
multicategories.

While the definition of \( (T,\cat V) \)-categories generalizes that of
enriched categories, the definitions of internal \( T \)-multicategories in \(
\cat B\), for \( T \) a (cartesian) monad and \( \cat B \) with pullbacks,
introduced in \cite[p.~8]{Bur71} and \cite[Definition~4.2]{Her00}, generalize
the notion of internal categories.  Following this viewpoint and the approach
of \cite[Theorem~1.6]{Luc18}, in order to study effective descent morphisms
between more general \( (T,\cat V) \)-categories, the first step is to study
effective descent morphisms of categories of \textit{internal}
\(T\)-multicategories, \textit{which is the aim of the present paper.}

The main contributions of our present work consist of two approaches to the
problem of finding effective descent morphisms between internal
multicategories. We explain, below, the key ideas of our first approach, which
is the main subject of Section \ref{First approach}. 

As a special case of \cite[Theorem~9.2]{Luc18} (see Proposition
\ref{pseq.descent}), given a pseudo-equalizer (\textit{iso-inserter})
\begin{equation*}
  \begin{tikzcd}
    \PsEq(F,G) \ar[r,"I"]
    & \cat C \ar[r,shift left,"F"] \ar[r,shift right,"G",swap]
    & \cat D
  \end{tikzcd}
\end{equation*}
of categories with pullbacks and pullback preserving functors, \( p \)
is of effective descent whenever \( FIp \) is of descent and \( Ip \) is of
effective descent. Therefore, whenever (effective) descent morphisms in \(
\cat C \) and \( \cat D \) are well-understood, we find tractable, sufficient
conditions for effective descent in \( \PsEq(F,G) \).

We establish the category \( \Cat(T, \cat B) \) of internal
\(T\)-multicategories in \( \cat B \) as an equalizer consisting of a category
of models of a finite limit sketch and categories of diagrams (Lemma
\ref{multicat.as.eq}), which is fully embedded in the corresponding
pseudo-equalizer (Theorem \ref{embedding}). Since descent in categories of
models of a finite limit sketch were studied in \cite[Section 3.2]{Cre99}, and
categories of diagrams are well-understood, we obtain sufficient conditions
for effective descent in the pseudo-equalizer by the result mentioned above
(Lemma \ref{part.1}).

Finally, we find that the embedding of \( \Cat (T, \cat B) \) into the
pseudo-equalizer reflects effective descent morphisms (Lemma \ref{part.2}),
getting, then, our first result. Namely, a functor \( p \) of internal
\(T\)-multicategories is effective for descent whenever 
\begin{itemize}
  \renewcommand\labelitemi{--}
  \item
    \(Tp_1\) is an effective descent morphism in \( \cat B \),
  \item
    \(Tp_2\) is a descent morphism in \( \cat B \), 
  \item
    \(p_3\) is an almost descent morphism in \( \cat B \),
\end{itemize}
\textit{where \( p_i \) is the component of \(p\) between the objects of
\(i\)-tuples of composable morphisms} (Theorem \ref{first.theorem}).

Our second approach to the problem is presented in Section \ref{Second
approach}, which extends the work of \cite{Cre99} on effective descent
morphisms between internal structures. We observe that the same techniques
employed in Le Creurer's work can be applied to the ``sketch'' of internal
\(T\)-multicategories. With these techniques, we were able to \textit{refine}
our result on effective descent morphisms. We prove that functors \(p\) such
that
\begin{itemize}
  \renewcommand\labelitemi{--}
    \item
      \( p_1 \) is an effecive descent morphism in \( \cat B \),
    \item
      \( p_2 \) is a descent morphism in \( \cat B \),
    \item
      \( p_3 \) is an almost descent morphism in \( \cat B \),
\end{itemize}
are effective descent morphisms in \( \Cat(T,\cat B) \).

The techniques exploited in Section \ref{Second approach} proved to be more
suitable to our context of internal structures. However, the approach given 
there cannot be trivially applied to other generalized (enriched) categorical
structures. Thus, Section \ref{First approach} has expository value and its
techniques are especially relevant to our future work in descent theory of
generalized (enriched) categorical structures.

After fixing some notation on Section \ref{Preliminaries}, 
we recall some basic aspects on effective descent morphisms in Section
\ref{Effective descent morphisms}. Then, we study the equalizer that gives the
category of internal \( T \)-multicategories and its corresponding
pseudo-equalizer in Section \ref{Multicategories and pseudoequalizers}.
Afterwards, we discuss each approach to our main problem in the two subsequent
sections. We end the paper with a discussion of examples of cartesian monads
and internal multicategories.

  \section{Preliminaries}
    \label{Preliminaries} 
    Let \( J \colon \cat B \to \cat C \) be a diagram with a limit \( (\lim J,
\lambda) \). For any cone \( \gamma_b \colon x \to Jb \), there exists a
unique morphism \( f \colon x \to \lim J \) such that \( \gamma_b = \lambda_b
\circ f \) for all \(b\) in \( \cat B \). We denote \(f\) as \((\gamma_b)_{b
\in \ob \, \cat B}\).  As an example, let \( \cat B \) be a category with
pullbacks, and \( \intcat C \) an internal category. The object of pairs of
composable morphisms is given by the pullback:
\begin{equation*}
  \begin{tikzcd}
    \intcat C_2 \ar[r,"d_0"] \ar[d, swap, "d_2"] & \intcat C_1 \ar[d,"d_1"] \\
    \intcat C_1 \ar[r, swap, "d_0"] & \intcat C_0
  \end{tikzcd}
\end{equation*}
Thus, if we have morphisms \( g \colon X \to \intcat C_1 \) and \( f \colon X
\to \intcat C_1 \) with \( d_1 \circ g = d_0 \circ f \), we write \( (g,\, f)
\) for the uniquely determined morphism \( X \to \intcat C_2 \).  Furthermore,
we denote the internal composition by \( g \bullet f = d_1 \circ (g,\, f) \),
where \(d_1 \colon \intcat C_2 \to \intcat C_1 \) is the composition morphism.
Likewise, we can talk about tuples of composable morphisms, an idea we apply
to \(T\)-multicategories.

Another remark on notation: in a category \( \cat B \) with a choice of
pullbacks, we write
\begin{equation*}
  \begin{tikzcd}
    v \ar[r,"\epsilon_f"] \ar[d,"p^*f",swap] & w \ar[d,"f"] \\
    x \ar[r, swap,"p"] & y
  \end{tikzcd}
\end{equation*}
for the chosen pullback of \( f \) along \( p \). It is clear that the
\textit{change-of-base} \( p^* \colon \cat B \comma y \to \cat B \comma x \)
defines a functor right adjoint to \( p_!  \colon \cat B \comma x \to \cat B
\comma y \) with counit \( \epsilon \). For a morphism \( h \colon f \to g \)
in \( \cat B \comma y \) (that is, \(f = g \circ h\)), write \( p^*_h \) for
the unique morphism \( p^*f \to p^*g \) such that \( \epsilon_g \circ p^*_h =
h \circ \epsilon_f \).

  \section{Effective descent morphisms}
    \label{Effective descent morphisms}
    We recall some known facts about effective descent morphisms. In a category \(
\cat B \) with chosen pullbacks along \(p\), the category \( \Desc(p) \) of
descent data for a morphism \( p \colon x \to y \) in \( \cat B \) is defined
as the category of algebras for the monad \( p^*p_! \). Explicitly, objects
are pairs of morphisms \( (a\colon w \to x, \gamma \colon v \to w) \)
satisfying
\begin{itemize}
  \renewcommand\labelitemi{--}
  \item
    \( p^*(p\circ a) = a \circ \gamma \), that is, \( \gamma \) is a
    morphism \( p^*(p\circ a) \to a \) in \( \cat B \comma x \),
  \item
    \( \gamma \circ p^*_{\epsilon_{p \circ a}} = \gamma \circ p^*_\gamma \),
    the multiplication law (note that \( p \circ a \circ \gamma = p \circ
    p^*(p \circ a) \), so that we may apply \(p^*\)),
  \item
    \( \gamma \circ (a,\id) = \id \), the unit law, where \( (a,\id) \) is the
    unique morphism such that \( a = p^*(p\circ a) \circ (a,\id) \) and \( \id
    = \epsilon_{p \circ a} \circ (a,\id) \).
\end{itemize}
A morphism \( (a,\gamma) \to (b,\theta) \) of descent data is a morphism \( f
\) with \( a = b \circ f \) such that \( f \circ \gamma = \theta \circ p^*_f
\).

Further recall the \textit{Eilenberg-Moore factorization} of \( p^* \):
\begin{equation*}
  \begin{tikzcd}
    \cat B \comma y \ar[rr,"\mathcal K^p"] 
                    \ar[rd,"p^*",swap] 
      && \Desc(p)   \ar[ld,"\mathcal U^p"] \\
    & \cat B \comma x
  \end{tikzcd}
\end{equation*}
Here, \( \mathcal U^p \) is the forgetful functor, and \( \mathcal K^p \) is
commonly denoted the \textit{comparison functor}. We say a morphism \(p\) is
\begin{itemize}
  \renewcommand\labelitemi{--}
  \item
    an \textit{almost descent morphism} if \( \mathcal K^p \) is faithful,
  \item
    a \textit{descent morphism} if \( \mathcal K^p \) is fully faithful,
  \item
    an \textit{effective descent morphism} if \( \mathcal K^p \) is an
    equivalence.
\end{itemize}

By the Bénabou-Roubaud theorem (originally proven in \cite{BR70}, see, for
instance, \cite[p.~258]{JT94} or \cite[Theorem~7.4~and~Theorem~8.5]{Luc18} for
generalizations), this is equivalent to the classical formulation of the
descent category w.r.t. the basic (bi)fibration. 

As a consequece of Beck's monadicity theorem, we may characterize (almost)
descent morphisms (also check \cite[Corollary 0.3.4]{Cre99} and \cite[Theorem
3.4]{JST04}):

\begin{proposition}
  \label{uniregepi.is.descent} 
  In a category \( \cat B \) with finite limits, pullback-stable epimorphisms
  are exactly the almost descent morphisms, and pullback-stable regular
  epimorphisms in are exactly the descent morphisms.
\end{proposition}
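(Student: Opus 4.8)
The plan is to read off both statements from Beck's monadicity theorem in its refined form, which controls faithfulness and full faithfulness of a comparison functor through the counit of the adjunction at hand. Here $\mathcal K^p$ is the comparison functor of the adjunction $p_! \adj p^*$, whose induced monad on $\cat B \comma x$ is $p^*p_!$. The essential geometric observation is that, at an object $f \colon w \to y$ of $\cat B \comma y$, the counit component $\epsilon_f \colon v \to w$ is exactly the pullback projection of $p$ along $f$, as fixed in Section~\ref{Preliminaries}; as $f$ ranges over $\cat B \comma y$, these $\epsilon_f$ range over precisely the pullbacks of $p$. Epimorphisms in $\cat B \comma y$ coincide with epimorphisms of underlying morphisms, and, since the forgetful functor $\cat B \comma y \to \cat B$ creates connected limits and connected colimits, a slice morphism is the coequalizer of its kernel pair if and only if its underlying morphism is; hence $\epsilon_f$ is an epimorphism (respectively, a regular epimorphism) in the slice exactly when the corresponding pullback of $p$ is one in $\cat B$.

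For the almost descent statement, I would use that $\mathcal K^p$ agrees with $p^*$ on morphisms, so it is faithful if and only if $p^*$ is; and a right adjoint is faithful precisely when each component of its counit is an epimorphism. Combined with the identification above, $\mathcal K^p$ is faithful if and only if every pullback of $p$ is an epimorphism, that is, if and only if $p$ is a pullback-stable epimorphism.

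For the descent statement, full faithfulness of $\mathcal K^p$ is, by definition, premonadicity of $p^*$, which is characterised by the requirement that each $\epsilon_f$ be the coequalizer of its canonical reflexive pair $\epsilon_{p_!p^*f},\ p_!p^*\epsilon_f \colon p_!p^*p_!p^*f \rightrightarrows p_!p^*f$. The key computation is to make this pair explicit: writing $v = w \times_y x$, the object $p_!p^*p_!p^*f$ is the double fibre product $w \times_y x \times_y x$, and its two structure maps into $v$ are exactly the two projections of the kernel pair of $\epsilon_f$. Hence premonadicity at $f$ amounts to $\epsilon_f$ being the coequalizer of its own kernel pair, i.e.\ a regular epimorphism; letting $f$ vary, this is precisely the assertion that $p$ is a pullback-stable regular epimorphism.

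The step I expect to be the main obstacle is this last identification: checking that the parallel pair produced by the monad $p^*p_!$ really is the kernel pair of the pullback projection, and in particular that the two monadic structure maps correspond to the two kernel-pair projections rather than to some other reflexive pair. Once this is verified, the subtle ``coequalizer of the canonical pair'' clause in the premonadicity criterion collapses to the familiar ``coequalizer of the kernel pair'', and both equivalences follow from elementary facts about (regular) epimorphisms in a finitely complete category.
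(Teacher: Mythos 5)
Your overall strategy is the same as the paper's: read both statements off Beck's theorem by characterising (full) faithfulness of \( \mathcal K^p \) through the counit of \( p_! \adj p^* \), and identify the counit components with the pullbacks of \( p \). Moreover, the step you flagged as the likely obstacle is in fact fine: writing \( v = w \times_y x \), the object underlying \( p_!p^*p_!p^*f \) is \( v \times_y x \), and since the first leg already satisfies \( f \circ \mathrm{pr}_w = p \circ \mathrm{pr}_x \), the condition defining this fibre product over \( y \) is equivalent to the one defining \( v \times_w v \); under this isomorphism the two maps \( \epsilon_{p_!p^*f} \) and \( p_!p^*\epsilon_f \) become exactly the two kernel-pair projections of \( \epsilon_f \). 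This makes explicit what the paper compresses into the phrase ``\( \mathcal K^p \) is (fully) faithful if and only if \( \epsilon \) is a pointwise (regular) epimorphism.''

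The genuine gap is elsewhere, and it sits precisely where the paper's proof spends its one extra sentence: the passage from ``(regular) epimorphism in \( \cat B \comma y \)'' to ``(regular) epimorphism in \( \cat B \).'' You justify it by saying the forgetful functor \( \cat B \comma y \to \cat B \) creates connected limits and connected colimits. Creation only works upward: given a coequalizer of the underlying pair in \( \cat B \), it lifts to the slice. It does not let you push a coequalizer in the slice down to \( \cat B \), because \( \cat B \) is only assumed finitely complete; a priori the kernel pair of the underlying morphism might have no coequalizer in \( \cat B \) at all, and then the underlying morphism fails to be a regular epimorphism even though the slice morphism is one. The same problem affects your unproved assertion that epimorphisms in \( \cat B \comma y \) coincide with epimorphisms of underlying morphisms: the downward direction is not formal. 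The repair is exactly the paper's closing observation: since \( \cat B \) has finite limits, the forgetful functor has a right adjoint \( b \mapsto (b \times y \to y) \), hence it preserves epimorphisms and all colimits that exist; combined with faithfulness and your creation argument (which do give the upward directions), this yields both transfers. With that one sentence added, your proof is complete and coincides with the paper's.
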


\begin{proof}
  Let \(p \colon x \to y\) be a morphism in \( \cat B \). \( \mathcal K^p \)
  is (fully) faithful if and only if \( \epsilon \) is a pointwise (regular)
  epimorphism (in \( \cat B \comma y \)), which happens if and only if \(p\)
  is a universal (regular) epimorphism in \( \cat B \comma y \), as \(
  \epsilon \) is given pointwise by pullback of \(p\).

  Since \( \cat B \) has a terminal object, the forgetful functor \( \cat B/y
  \to \cat B \) has a right adjoint, hence it preserves colimits.
\end{proof}

Thus, once we have a pullback-stable regular epimorphism \(p\), it is natural
to take an interest in studying the image of \(\mathcal K^p\). To do so, we
make the following elementary observation. Since we have defined descent data
as algebras, we restrict our attention to this context. It should be noted,
however, that the result holds in much more general contexts, and hence its
applicability in descent arguments does not depend on the Bénabou-Roubaud
theorem.

\begin{lemma}
  \label{lem:ap-image-Eilenberg-Moore}
  Let \( ( L\adj U, \epsilon , \eta) \colon \cat A \to \cat B \) be an
  adjunction and let \( T \) be the induced monad. An algebra \( (a, \gamma)
  \) is in the image of the Eilenberg-Moore comparison \( \mathcal K^T \colon
  \cat A \to T \dash \Alg \) if, and only if, \( a \) is in the image of \( U
  \) and
  \begin{equation}
    \label{eq:condition-EM-Image} 
    \epsilon_w \circ L\gamma = \epsilon_w \circ \epsilon_{LUw}.
  \end{equation} 
  where \(w\) is an object such that \( a = Uw \).
\end{lemma}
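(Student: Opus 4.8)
The plan is to render the image of \( \mathcal K^T \) completely explicit and then collapse the stated criterion to a single adjoint transposition. Recall that for an object \( w \) of \( \cat A \) the comparison functor is given by
\[
  \mathcal K^T(w) = (Uw,\, U\epsilon_w),
\]
the canonical \( T \)-algebra structure carried by \( Uw \). Consequently an algebra \( (a,\gamma) \) lies in the image of \( \mathcal K^T \) precisely when there is some \( w \) with \( a = Uw \) \emph{and} \( \gamma = U\epsilon_w \). The first condition is exactly the requirement that \( a \) belong to the image of \( U \); so, fixing a choice of \( w \) with \( a = Uw \), the entire statement reduces to proving that the structural identity \( \gamma = U\epsilon_w \) is equivalent to \eqref{eq:condition-EM-Image}.

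To establish this equivalence I would compare the parallel morphisms \( \gamma,\, U\epsilon_w \colon ULUw \to Uw \) of \( \cat B \) by passing to their adjoint transposes. Under the bijection \( \cat B(b, Uw) \cong \cat A(Lb, w) \), which sends \( g \) to \( \epsilon_w \circ Lg \), the transpose of \( \gamma \) is \( \epsilon_w \circ L\gamma \) while the transpose of \( U\epsilon_w \) is \( \epsilon_w \circ LU\epsilon_w \). Naturality of the counit at the morphism \( \epsilon_w \colon LUw \to w \) gives
\[
  \epsilon_w \circ LU\epsilon_w = \epsilon_w \circ \epsilon_{LUw},
\]
so the transpose of \( U\epsilon_w \) is exactly the right-hand side of \eqref{eq:condition-EM-Image}. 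Since transposition is injective, \( \gamma = U\epsilon_w \) holds if and only if the two transposes coincide, that is, if and only if \( \epsilon_w \circ L\gamma = \epsilon_w \circ \epsilon_{LUw} \). This single computation yields both implications at once: the forward direction reads the transpose identity off from \( \gamma = U\epsilon_w \), and the converse cancels the (injective) transposition to recover \( \gamma = U\epsilon_w \), whence \( (a,\gamma) = \mathcal K^T(w) \).

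I do not anticipate a genuine obstacle, as the argument is purely formal; the only delicate point is the bookkeeping of variances and of the several instances of \( \epsilon \) — in particular recognising \( \epsilon_w \circ L(-) \) as literally the left transpose, so that it may be cancelled on the left, and reading \( \epsilon_{LUw} \) as the counit component at the object \( LUw \). I would also flag, as a sanity check, that the algebra axioms for \( (a,\gamma) \) are never invoked: the criterion in fact characterises the image of \( \mathcal K^T \) among all pairs \( (a,\gamma) \) with \( a \) in the image of \( U \), consistent with the earlier remark that its use in descent arguments does not depend on the Bénabou--Roubaud theorem.
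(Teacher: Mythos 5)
Your proof is correct and takes essentially the same route as the paper: the paper likewise handles the forward direction by naturality of \( \epsilon \), and its displayed computation \( \gamma = U\epsilon_w \circ \eta_{Uw} \circ \gamma = \dots = U\epsilon_w \) is exactly the injectivity of the adjoint transposition \( g \mapsto \epsilon_w \circ Lg \) written out via the triangle identities and naturality of \( \eta \). Your packaging through the hom-set bijection \( \cat B(TUw, Uw) \cong \cat A(LUw, w) \) merely makes both implications fall out of one computation; no new idea is involved, and your closing observation that the algebra axioms are never used is consistent with the paper's proof as well.
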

\begin{proof}
  The algebra \( \mathcal K^Tw \) satisfies \eqref{eq:condition-EM-Image} by
  naturality. Conversely, if an algebra \( (Uw, \gamma) \) satisfies
  \eqref{eq:condition-EM-Image}, then
  \begin{equation*}
    \gamma = U\epsilon_w \circ \eta_{Uw} \circ \gamma
           = U\epsilon_w \circ UL\gamma \circ \eta_{ULUw}
           = U\epsilon_w \circ U\epsilon_{LUw} \circ \eta_{ULUw}
           = U\epsilon_w
  \end{equation*}
  Hence \( (Uw, \gamma) = \mathcal K^Tw \).
\end{proof}

As a corollary, we get a fairly commonly used result in proofs about effective
descent morphisms. It has been, sometimes, implicitly assumed in the
literature. The instance of Le Creurer's argument in Proposition 3.2.4, where
he implicitly uses this result, is of particular interest for our work. 

\begin{corollary}
  \label{Lemma11}
  \( \mathcal K^p \) is essentially surjective if and only if, for all descent
  data \((a,\gamma)\), there is \( f \) such that \( p^*f \cong a \) and \(
  \epsilon_f \circ \gamma = \epsilon_f \circ \epsilon_{p\circ a} \).
\end{corollary}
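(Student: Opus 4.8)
The plan is to read Corollary \ref{Lemma11} as the concrete shadow of Lemma \ref{lem:ap-image-Eilenberg-Moore} applied to the change-of-base adjunction \( p_! \adj p^* \) of the Preliminaries. Its induced monad is \( p^* p_! \), whose algebras are exactly the descent data and whose Eilenberg--Moore comparison is \( \mathcal K^p \colon \cat B \comma y \to \Desc(p) \), with forgetful functor \( \mathcal U^p \). So it suffices to specialize the abstract image-condition \eqref{eq:condition-EM-Image} and then account for the difference between ``lies in the image of \( \mathcal K^p \)'' (what the Lemma characterizes) and ``\( \mathcal K^p \) is essentially surjective'' (what the Corollary asserts).

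First I would carry out the specialization. Taking \( L = p_! \) and \( U = p^* \), a witness \( w \) with \( a = Uw \) is precisely an object \( f := w \) of \( \cat B \comma y \) with \( p^* f = a \). Then \( LUw = p_! p^* f = p \circ a \) as an object of \( \cat B \comma y \), so that \( \epsilon_{LUw} = \epsilon_{p \circ a} \), while \( \epsilon_w = \epsilon_f \) and \( p_! \gamma = L\gamma \) has the same underlying arrow of \( \cat B \) as \( \gamma \). Reading \eqref{eq:condition-EM-Image} on underlying arrows therefore gives exactly
\begin{equation*}
  \epsilon_f \circ \gamma = \epsilon_f \circ \epsilon_{p \circ a}.
\end{equation*}
Thus Lemma \ref{lem:ap-image-Eilenberg-Moore} already states that \( (a,\gamma) \) lies in the (strict) image of \( \mathcal K^p \) if and only if there is \( f \) with \( p^* f = a \) satisfying this equation; this translation is the substantive content and is a direct unravelling of the definitions.

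It remains to pass from the strict image to essential surjectivity, which is where the isomorphism \( p^* f \cong a \) enters. Since \( \mathcal U^p \) preserves and reflects isomorphisms and \( \mathcal U^p \mathcal K^p = p^* \), an isomorphism \( \mathcal K^p f \cong (a,\gamma) \) in \( \Desc(p) \) restricts to an isomorphism \( p^* f \cong a \) in \( \cat B \comma x \), and conversely any such isomorphism \( \phi \) lets one transport \( \gamma \) to a descent structure on \( p^* f \) yielding an algebra isomorphic to \( (a,\gamma) \). For the forward implication I would take this \( f \) and transport the equation \eqref{eq:condition-EM-Image}, which \( \mathcal K^p f \) satisfies by naturality, across \( \phi \) to obtain the displayed equation for \( (a,\gamma) \); for the converse I would transport the given equation to the strict algebra on \( p^* f \), apply the translated Lemma to identify it with \( \mathcal K^p f \), and conclude \( (a,\gamma) \cong \mathcal K^p f \). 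The only step that is not pure formalism is checking that the displayed equation is invariant under this transport along \( \phi \), which is a short naturality computation for \( \epsilon \) together with the definition of the transported structure; this is the main (and only mild) obstacle, essentially the bookkeeping that reconciles the strict equality \( p^* f = a \) in the Lemma with the isomorphism \( p^* f \cong a \) in the Corollary.
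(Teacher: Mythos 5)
Your proposal is correct and takes essentially the same approach as the paper: Corollary \ref{Lemma11} is presented there as an immediate specialization of Lemma \ref{lem:ap-image-Eilenberg-Moore} to the adjunction \( p_! \adj p^* \) (whose induced monad is \( p^*p_! \), whose algebras are descent data, and whose comparison is \( \mathcal K^p \)), with no further proof given. Your transport-along-\( \phi \) argument correctly supplies the strict-image-versus-essential-surjectivity bookkeeping that the paper leaves implicit.
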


We finish this section recalling the following classical descent result (see
\cite[2.7]{JT94}, \cite[3.9]{JST04}):

\begin{proposition}
  \label{subcat.descent}
  Let \( U \colon \cat C \to \cat D \) be a fully faithful,
  pullback-preserving functor, and let \(p\) be a morphism in \( \cat C \)
  such that \(Up\) is effective for descent. Then \(p\) is effective for
  descent if and only if for all pullback diagrams of the form
  \begin{equation}
    \label{obs}
    \begin{tikzcd}
      Ux \ar[d] \ar[r] & z \ar[d,"f"] \\
      Ue \ar[r,swap,"Up"] & Ub
    \end{tikzcd}
  \end{equation}
  there exists an isomorphism \( Uy \iso z \) for \(y\) an object of \( \cat
  C \).
\end{proposition}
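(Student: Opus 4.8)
The plan is to exploit the functoriality of the descent construction along \( U \). Since \( U \) is fully faithful and preserves pullbacks, it induces a fully faithful functor \( U \colon \cat C \comma b \to \cat D \comma Ub \) on slices and a fully faithful functor \( \bar U \colon \Desc(p) \to \Desc(Up) \) on descent categories (the latter because descent data are assembled entirely from pullbacks, which \(U\) preserves), fitting into a square
\begin{equation*}
  \begin{tikzcd}
    \cat C \comma b \ar[r,"\mathcal K^p"] \ar[d,"U",swap]
      & \Desc(p) \ar[d,"\bar U"] \\
    \cat D \comma Ub \ar[r,swap,"\mathcal K^{Up}"]
      & \Desc(Up)
  \end{tikzcd}
\end{equation*}
that commutes up to natural isomorphism. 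I would first observe that \( \mathcal K^p \) is automatically fully faithful: the composite \( \bar U \mathcal K^p \iso \mathcal K^{Up} U \) is fully faithful (as \( \mathcal K^{Up} \) is an equivalence and \( U \) is fully faithful), and \( \bar U \) is faithful, so both faithfulness and fullness of \( \mathcal K^p \) follow formally. Consequently \( p \) is of effective descent if and only if \( \mathcal K^p \) is essentially surjective, and it is exactly this surjectivity that the pullback condition is designed to capture.

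For the sufficiency direction I would pass descent data back and forth through the square. Given a descent datum \( (a,\gamma) \in \Desc(p) \), effective descent of \( Up \) yields \( f \) with \( \mathcal K^{Up} f \iso \bar U(a,\gamma) \); unwinding \( \mathcal K^{Up} \), the underlying object of this datum is \( (Up)^\ast f \), so pulling \( f \) back along \( Up \) produces precisely a square of the form \eqref{obs}, with top-left corner \( Ux \) (where \( a \colon x \to e \)). The hypothesis then supplies an isomorphism \( Uy \iso z \) with \( z \) the domain of \( f \); since \( U \) is fully faithful this upgrades to \( f \iso Uq \) for a morphism \( q \colon y \to b \) in \( \cat C \). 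Then \( \bar U \mathcal K^p q \iso \mathcal K^{Up} Uq \iso \mathcal K^{Up} f \iso \bar U(a,\gamma) \), and \( \bar U \) being fully faithful (hence reflecting isomorphisms) gives \( \mathcal K^p q \iso (a,\gamma) \). This establishes essential surjectivity, and thus effective descent.

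Conversely, assuming \( p \) is of effective descent, I would take any pullback square of the shape \eqref{obs}, that is, a morphism \( f \colon z \to Ub \) whose pullback along \( Up \) has the form \( Ua \colon Ux \to Ue \). The comparison datum \( \mathcal K^{Up} f \) then has underlying object \( Ua \) in the image of \( U \); because \( U \) is full and preserves pullbacks, the structure morphism of \( \mathcal K^{Up} f \) is itself \( U \) of a morphism of \( \cat C \), so \( \mathcal K^{Up} f \) lifts along \( \bar U \) to a descent datum \( (a,\gamma) \in \Desc(p) \), its axioms following by faithfulness of \( U \). Effective descent of \( p \) then furnishes \( q \) with \( \mathcal K^p q \iso (a,\gamma) \), and applying \( \bar U \) together with the equivalence \( \mathcal K^{Up} \) yields \( Uq \iso f \), whence \( Uy \iso z \) as required.

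The main obstacle is the lifting step in the converse: showing that the structure morphism of the \( \cat D \)-datum \( \mathcal K^{Up} f \) descends to a morphism of \( \cat C \). This is exactly where fullness of \( U \) together with its preservation of pullbacks is indispensable, and it requires care with the chosen pullbacks entering the definition of \( \Desc(p) \)—strictly, one works up to the canonical comparison isomorphisms witnessing that \( U \) preserves them, so the identities above hold up to coherent isomorphism rather than on the nose. Verifying that \( \bar U \) is well defined and fully faithful, and that these coherence isomorphisms do not obstruct the reflection-of-isomorphisms arguments, is the principal bookkeeping burden of the proof.
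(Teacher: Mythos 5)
Your proposal is correct. Note, however, that the paper itself offers no proof of Proposition \ref{subcat.descent}: it is recalled as a classical result with pointers to \cite[2.7]{JT94} and \cite[3.9]{JST04}, so there is no in-paper argument to compare against. What you wrote is essentially the standard argument of those references: the fully faithful functors induced by \( U \) on slices and on descent categories, the square commuting up to natural isomorphism with the comparison functors, the formal observation that \( \mathcal K^p \) is automatically fully faithful (since \( \bar U \mathcal K^p \iso \mathcal K^{Up} U \) is fully faithful and \( \bar U \) is faithful), and the two-way translation between essential surjectivity of \( \mathcal K^p \) and the condition on squares \eqref{obs} — using in one direction that \( \bar U \) reflects isomorphy of objects, and in the other that the structure morphism of \( \mathcal K^{Up} f \) lifts along \( U \) by fullness and pullback-preservation. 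The coherence caveat you flag at the end (chosen pullbacks are preserved only up to canonical isomorphism, so \( \bar U \) and the commuting square exist only up to coherent isomorphism) is indeed the sole bookkeeping burden, and it does not obstruct any of the reflection arguments; spelling it out would complete the proof.
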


The following consequence is of particular interest:

\begin{corollary}
  \label{mmc}
  Let \( U \colon \cat C \to \cat D \) be a fully faithful,
  pullback-preserving functor. If there exists \( z \iso Uy \) whenever there
  is an effective descent morphism \( g \colon Ux \to z \), then \( U \)
  reflects effective descent morphisms.
\end{corollary}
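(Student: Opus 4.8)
The plan is to deduce this directly from Proposition \ref{subcat.descent}. Let \( p \colon e \to b \) be a morphism in \( \cat C \) for which \( Up \) is effective for descent; I want to conclude that \( p \) itself is effective for descent. Since \( U \) is assumed fully faithful and pullback-preserving, Proposition \ref{subcat.descent} applies verbatim, so it suffices to verify its pullback condition: given any pullback diagram of the shape \eqref{obs}, I must exhibit an object \( y \) of \( \cat C \) together with an isomorphism \( Uy \iso z \).

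So I would fix such a pullback square, with bottom edge \( Up \colon Ue \to Ub \), right edge \( f \colon z \to Ub \), and top-left corner \( Ux \). The first observation is that the top edge \( g \colon Ux \to z \) is precisely the pullback of \( Up \) along \( f \), i.e. \( g = f^{\ast}(Up) \). The key step is then to invoke the pullback-stability of effective descent morphisms in \( \cat D \): since \( Up \) is effective for descent and \( g \) is obtained from it by pulling back along \( f \), the morphism \( g \) is again effective for descent. Its domain \( Ux \) lies in the image of \( U \), so \( g \colon Ux \to z \) is an effective descent morphism out of an object in the image of \( U \).

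At this point the hypothesis of the corollary supplies the remaining ingredient: by assumption, the existence of an effective descent morphism \( g \colon Ux \to z \) forces an isomorphism \( z \iso Uy \) for some object \( y \) of \( \cat C \). This is exactly the conclusion demanded by Proposition \ref{subcat.descent}, and since the pullback square was arbitrary, that proposition yields that \( p \) is effective for descent. As \( p \) was an arbitrary morphism with \( Up \) effective for descent, \( U \) reflects effective descent morphisms, as claimed.

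The main obstacle is the single genuinely non-formal input, namely the pullback-stability of effective descent morphisms: one needs that pulling \( Up \) back along \( f \) preserves the property of being effective for descent. This is a standard fact of Grothendieck descent theory, transparent from the fibrational (Bénabou--Roubaud) viewpoint in which effective descent morphisms are detected by a base-change condition that is itself compatible with further pullbacks; I would cite it rather than reprove it. Everything else in the argument is a purely formal chase through Proposition \ref{subcat.descent}, using only that \( g \) is the pullback \( f^{\ast}(Up) \) and that its domain is in the image of \( U \).
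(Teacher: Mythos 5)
Your proof is correct and is essentially identical to the paper's own argument: both verify the hypothesis of Proposition \ref{subcat.descent} by noting that the top edge of the square \eqref{obs} is \( f^*(Up) \), invoking pullback-stability of effective descent morphisms to see it is effective for descent, and then applying the assumed condition to obtain \( z \iso Uy \). The only difference is expository length; the paper likewise cites pullback-stability without reproving it.
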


\begin{proof}
  Suppose \eqref{obs} is a pullback square. If \( Up
  \) is an effective descent morphism, then so is \( f^*(Up) \colon Ux \to z
  \) by pullback-stability. By hypothesis, we have \( z \iso Uy \), whence we
  conclude that $p$ is effective for descent by Proposition
  \ref{subcat.descent}. 
\end{proof}

  \section{Multicategories and pseudo-equalizers}
    \label{Multicategories and pseudoequalizers} 
    Recall that a monad \(T = (T,m,e) \) is \textit{cartesian} if \(T\) preserves
pullbacks and the naturality squares of \(m\) and \(e\) are pullbacks.

As defined in \cite{Her00}, for \(T\) a cartesian monad on a category \( \cat
B \) with pullbacks, a \(T\)-multicategory internal to \( \cat B \) is a monad
in the bicategory \( \Span_T(\cat B) \), and a functor between two such
\(T\)-multicategories is a monad morphism considering the usual proarrow
equipment \( \cat B\to \Span_T(\cat B) \); these define the category \(
\Cat(T, \cat B) \).  Explicitly, a \(T\)-multicategory is given by an object
\( x_0 \) of \( \cat B \), together with a span 
\begin{equation*}
  \begin{tikzcd}
    Tx_0 & x_1 \ar[l,"d_1",swap] \ar[r,"d_0"] & x_0
  \end{tikzcd}
\end{equation*}
and two morphisms, given by dashed arrows below
\begin{equation*}
  \begin{tikzcd}
    & x_2 \ar[ld,"m \circ Td_1 \circ d_2",swap] 
          \ar[rd,"d_0 \circ d_0"] 
          \ar[dd,"d_1",dashed]
    &&& x_0 \ar[ld,"e",swap] 
           \ar[rd,"\mathrm{id}_{x_0}"] 
           \ar[dd,"s_0",dashed] \\
    Tx_0 && x_0 & Tx_0 && x_0 \\
    & x_1 \ar[lu,"d_1"] \ar[ru,"d_0",swap] 
    &&& x_1 \ar[lu,"d_1"] \ar[ru,"d_0",swap]
  \end{tikzcd}
\end{equation*}
which make the triangles commute, where
\begin{equation*}
  \begin{tikzcd}
    x_2 \ar[d,swap,"d_0"] \ar[r,"d_2"]
    & Tx_1 \ar[d,"Td_0"] \\
    x_1 \ar[r,swap,"d_1"] & Tx_0
  \end{tikzcd}
\end{equation*}
is a pullback diagram. Moreover, this data is required to satisfy certain
identity and associativity conditions, which we will proceed to specify.

Following the terminology of Section \ref{Preliminaries}, we say that a pair
\( g \colon a \to x_1 \), \( f \colon a \to Tx_1 \) is \textit{composable} if
\( d_1g = (Td_0)f \), we write \( (g,f) \colon a \to x_2 \) for the uniquely
defined morphism, and we let \( g \bullet f = d_1(g,f) \). Likewise, define \(
k \bullet_T f = (Td_1)(k,h) \) for \( k \colon a \to Tx_1 \) and \( h \colon a
\to TTx_1 \) such that \( (Td_1)k = (TTd_0)h \) (\(T\)-composable).

The identity properties of the monad guarantee that \( 1_{d_0f} \bullet (e
\circ f) = f = f \bullet 1_{d_1f} \), and the associativity property
guarantees that \( h \bullet (g \bullet_T f) = (h \bullet g) \bullet (m \circ
f) \), where we are implicitly given the following pullback diagram
\begin{equation*}
  \begin{tikzcd}
    x_3 \ar[d,swap, "d_0"] \ar[r,"d_3"]
    & Tx_2 \ar[d,"Td_0"] \\
    x_2 \ar[r,swap,"d_2"] & Tx_1
  \end{tikzcd}
\end{equation*}
for \( h \colon a \to x_1 \), \( g \colon a \to Tx_1 \) and \( f \colon a \to
TTx_1 \) such that \( h,\,g \) are composable and \( g,\,f \) are
\(T\)-composable. Moreover, a functor \( p \colon x \to y \) between
internal \(T\)-multicategories is given by a pair of morphisms \( p_0 \colon
x_0 \to y_0 \) and \( p_1 \colon x_1 \to y_1 \) such that \( d_i \circ p_1 =
(T^ip_0) \circ d_i \) for \(i=0,1\), \( 1_{p_0}  = p_1 1 \) and \( p_1g
\bullet p_1f = p_1(g\bullet f) \). 

Going back to an internal description, we may denote
\begin{itemize}[noitemsep]
  \renewcommand\labelitemi{--}
  \item
    \( s_0 = (\id,\, Ts_0 \circ d_1) \colon x_1 \to x_2 \),

  \item
    \( s_1 = (s_0 \circ d_0,\, e) \colon x_1 \to x_2 \),

  \item
    \( d_1 = (d_0 \circ d_0,\, Td_1 \circ d_3) \),

  \item
    \( d_2 = (d_1 \circ d_0,\, m \circ Td_2 \circ d_3) \),
\end{itemize}
so the above data can be organized in the following diagram
\begin{equation*}
  \begin{tikzcd}[row sep=large, column sep=large]
    x_0 \ar[r,shift left,"s_0"]
        \ar[rd,"e",swap]
    & x_1 \ar[r,shift left=0.6em,"s_0" description,near start]
          \ar[r,shift left=1.2em,"s_1",near start]
          \ar[l,"d_0",shift left]
          \ar[d,"d_1",swap]
          \ar[rd,"e",swap] 
    & x_2 \ar[l,"d_0" description,swap,near start]
          \ar[l,shift left=0.6em,"d_1",near start]
          \ar[d,"d_2",swap]
    & x_3 \ar[l,shift left=0.6em,"d_2"]
          \ar[l,"d_1" description]
          \ar[l,shift right=0.6em,"d_0",swap] 
          \ar[d,"d_3",swap] \\
    & Tx_0 \ar[r,"Ts_0",shift left]
    & Tx_1 \ar[d,"Td_1",swap]
           \ar[l,"Td_0",shift left]
    & Tx_2 \ar[l,shift right,"Td_0",swap]
           \ar[l,shift left,"Td_1"] 
           \ar[d,"Td_2",swap] \\
    && TTx_0  \ar[lu,"m"]
    & TTx_1 \ar[lu,"m"] \ar[l,"TTd_0"]
  \end{tikzcd}
\end{equation*}
which is similar to \cite[Figure 1]{Bur71}. In fact, one may define
\(T\)-multicategory as a diagram satisfying certain relations, a description
particularly suitable for our techniques in Section \ref{First approach}.
First, we let \(\cat S\) be the (finite limit) sketch given by the following
graph
\begin{equation}
  \label{almost.cosimp}
  \begin{tikzcd}[row sep=large, column sep=large]
    x_0 \ar[r,shift left,"s_0"]
        \ar[rd,"e_0",swap]
    & x_1 \ar[r,shift left=0.6em,"s_0" description,near start]
          \ar[r,shift left=1.2em,"s_1",near start]
          \ar[l,"d_0",shift left]
          \ar[d,"d_1",swap]
          \ar[rd,"e_1",swap] 
    & x_2 \ar[l,"d_0" description,swap,near start]
          \ar[l,shift left=0.6em,"d_1",near start]
          \ar[d,"d_2",swap]
    & x_3 \ar[l,shift left=0.6em,"d_2"]
          \ar[l,"d_1" description]
          \ar[l,shift right=0.6em,"d_0",swap] 
          \ar[d,"d_3",swap] \\
    & x'_0 \ar[r,"s'_0",shift left]
    & x'_1 \ar[d,"d'_1",swap]
           \ar[l,"d'_0",shift left]
    & x'_2 \ar[l,shift right,"d'_0",swap]
           \ar[l,shift left,"d'_1"] 
           \ar[d,"d'_2",swap] \\
    && x''_0  \ar[lu,"m_0"]
    & x''_1 \ar[lu,"m_1"] \ar[l,"d_0''"]
  \end{tikzcd}
\end{equation}
with relations resembling cosimplicial identities
\begin{itemize}[noitemsep]
  \renewcommand\labelitemi{--}
  \item
    \( s_1 \circ s_0 = s_0 \circ s_0 \colon x_0 \to x_2 \),
  \item
    \( d_{1+i} \circ s_i = e_i \colon x_i \to x'_i \), 
  \item
    \( d_i \circ s_j = \id \colon x_i \to x_i \), 
  \item
    \( d_2 \circ s_0 = s_0' \circ d_1 \colon x_1 \to x'_1 \),
  \item
    \( d_0 \circ s_1 = s_0 \circ d_0 \colon x_1 \to x_1 \),
  \item
    \( d'_0 \circ s'_0 = \id \colon x'_0 \to x'_0 \),
  \item
    \( d_{1+i} \circ d_{1+i} = m_i \circ d'_{1+i} \circ d_{2+i} \colon x_{2+i}
    \to x'_i\), 
  \item
    \( d_{1+i} \circ d_0 = d_0' \circ d_{2+i} \colon x_{2+i} \to x_i \),  
  \item
    \( d'_j \circ d_{2+i} = d_{1+i} \circ d_j \colon x_{2+i} \to x'_i \), 
  \item
    \( d_0 \circ d_1 = d_0 \circ d_0 \colon x_2 \to x_0 \),
  \item
    \( d_j \circ d_{1+i} = d_i \circ d_j \colon x_3 \to x_1 \),
  \item
    \( d'_1 \circ d_0' = d_0'' \circ d_2' \colon x_2' \to x_0'' \),
  \item
    \( d'_0 \circ d'_1 = d'_0 \circ d'_0 \colon x'_2 \to x_0' \),
\end{itemize}
and limit cones
\begin{equation}
  \label{pb.squares}
  \begin{tikzcd}
    x_{2+i} \ar[r,"d_0"] \ar[d,"d_{2+i}"] & x_{1+i} \ar[d,"d_{1+i}"] \\
    x'_{1+i} \ar[r,"d'_0"] & x'_i
  \end{tikzcd}
  \qquad
  \begin{tikzcd}
    x_2' \ar[r,"d'_0"] \ar[d,"d'_2"] & x_1' \ar[d,"d'_1"] \\
    x''_1 \ar[r,"d''_0"] & x''_0
  \end{tikzcd}
\end{equation}
with \( i=0,1 \) and \( j \leq i \). Abusing notation, we also denote by \(
\cat S \) the category generated by the graph (\ref{almost.cosimp}) and the
given relations. Writing \( \Mod(\cat S, \cat B) \) for the category of \(
\cat B \)-models of \( \cat S \), we have:

\begin{lemma}
  \label{multicat.as.eq}
  For a cartesian monad \( (T,m,e) \) on a category \( \cat B \) with
  pullbacks, \( \Cat(T,B) \) is given as the equalizer of the following
  composite of pullback-preserving functors:
  \begin{equation}
    \label{bieq}
    \begin{tikzcd}[cramped,column sep=small]
      \Mod(\cat S, \cat B) \ar[r,"I"]
      & \left[ \cat S, \cat B \right] \ar[r,shift left,"S^*_-"] 
                                    \ar[r,shift right,"\Phi",swap]
      & \left[\cat S_T, \cat B\right] 
        \times \left[\cat S_{m_0}, \cat B\right]
        \times \left[\cat S_{m_1}, \cat B\right]
        \times \left[\cat S_{e_0}, \cat B\right]
        \times \left[\cat S_{e_1}, \cat B\right]
    \end{tikzcd}
  \end{equation}
  Moreover, \( \Cat(T, \cat B) \) has pullbacks and the canonical functor \(
  \Cat(T,\cat B) \to \Mod(\cat S, \cat B) \) preserves them.
\end{lemma}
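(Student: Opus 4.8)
The plan is to identify the equalizer in \eqref{bieq} with \( \Cat(T,\cat B) \) by arranging the two parallel functors so that their coincidence on a model \( M \colon \cat S \to \cat B \) forces exactly the ``\( T \)-decoration'' of \( M \), and then to read off the remaining structure as a monad in \( \Span_T(\cat B) \). Concretely, I regard \( S^*_- \) and \( \Phi \) as having one component for each factor of the codomain. On the \( \cat S_T \)-factor, \( \Phi \) restricts \( M \) to the primed objects \( x'_0,x'_1,x'_2 \) and their maps, while \( S^*_- \) restricts \( M \) to the unprimed truncation \( \{x_0,x_1,x_2\} \) and post-composes with \( T \); on the \( \cat S_{m_i} \)-factors one compares the double-primed data \( x''_i \) (carrying \( m_i \)) against \( T^2 \) of the unprimed data (carrying the multiplication \( m \)); and on the \( \cat S_{e_i} \)-factors one compares \( e_i \) against the unit component of \( T \). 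Thus \( S^*_-M = \Phi M \) holds precisely when \( Mx'_i = T(Mx_i) \) and \( Mx''_i = T^2(Mx_i) \), the primed (resp.\ double-primed) structure maps are \( T \) (resp.\ \( T^2 \)) of the unprimed ones, \( Me_i \) is the unit \( e_{Mx_i} \), and \( Mm_i \) is the multiplication \( m_{Mx_i} \).

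First I would establish the object-level correspondence. Given a model \( M \) lying in the equalizer, the span \( Mx'_0 = T(Mx_0) \xleftarrow{Md_1} Mx_1 \xrightarrow{Md_0} Mx_0 \) together with the limit cones \eqref{pb.squares} (which, as \( M \) is a model, are sent to genuine pullbacks, so that \( Mx_2 \) and \( Mx_3 \) are the objects of composable pairs and triples) assembles into a monad in \( \Span_T(\cat B) \): the identity \( s_0 \) and the composition encoded by \( d_1 \) supply the unit and multiplication of the monad, and the cosimplicial-type relations of \( \cat S \) — in particular the relation \( d_{1+i}\circ d_{1+i} = m_i\circ d'_{1+i}\circ d_{2+i} \), which ties the composition to the monad multiplication — translate into the unit and associativity axioms of an internal \( T \)-multicategory. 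Conversely, the nerve of any internal \( T \)-multicategory yields a model of \( \cat S \) satisfying the equalizer condition. This is the direct analogue of the familiar description of an internal category as a truncated simplicial object subject to Segal conditions and simplicial identities, now decorated by \( T \).

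Next I would treat morphisms and assemble an isomorphism of categories. A morphism in the equalizer is a model morphism \( \alpha \colon M \to N \) with \( S^*_-\alpha = \Phi\alpha \), that is \( \alpha_{x'_i} = T(\alpha_{x_i}) \) and \( \alpha_{x''_i} = T^2(\alpha_{x_i}) \); setting \( p_0 = \alpha_{x_0} \) and \( p_1 = \alpha_{x_1} \), naturality of \( \alpha \) with respect to \( d_0,d_1 \) recovers \( d_i \circ p_1 = (T^i p_0)\circ d_i \), and naturality with respect to \( s_0 \) and the composition recovers preservation of identities and of \( \bullet \); conversely every internal functor extends uniquely to such an \( \alpha \). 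Tracking these assignments on objects and morphisms gives an isomorphism between \( \Cat(T,\cat B) \) and the equalizer of \( S^*_-\circ I \) and \( \Phi\circ I \).

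Finally, for the ``moreover'' assertion: models of a finite-limit sketch are closed under pointwise limits, so \( \Mod(\cat S,\cat B) \) has pullbacks computed objectwise in \( \cat B \) and \( I \) preserves them; because \( T \) is cartesian, post-composition with \( T \) (and with \( T^2 \)) and the insertion of the unit and multiplication components preserve pointwise pullbacks, so \( S^*_- \) and \( \Phi \) are pullback-preserving, as the statement requires. It then follows that the equalizer inclusion creates these pullbacks: the pointwise pullback \( P \) of a cospan in \( \Cat(T,\cat B) \) still satisfies \( S^*_-P = \Phi P \), since both functors preserve \( P \) and agree on the (equalizer) cospan, so \( P \) lies in \( \Cat(T,\cat B) \) and serves as the pullback there. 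Hence \( \Cat(T,\cat B) \) has pullbacks and the canonical functor to \( \Mod(\cat S,\cat B) \) preserves them. I expect the main obstacle to be the bookkeeping of the second paragraph — checking that the listed relations of \( \cat S \), once the \( T \)-decoration is imposed, are neither redundant nor deficient as a presentation of the monad axioms in \( \Span_T(\cat B) \), with the composition relation being the delicate one.
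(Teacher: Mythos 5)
Your first three paragraphs carry out what the paper treats as a definitional unfolding (the paper states Lemma \ref{multicat.as.eq} without proof): the equalizer condition forces \( Mx'_i = T(Mx_i) \), \( Mx''_i = TT(Mx_i) \), the primed and double-primed structure maps to be the \( T \)- and \( TT \)-images of the unprimed ones, \( Me_i = e_{Mx_i} \), \( Mm_i = m_{Mx_i} \); the sketch relations and the pullback cones \eqref{pb.squares} then transcribe the monad-in-\( \Span_T(\cat B) \) axioms, and morphisms in the equalizer correspond to internal functors. This is correct (the fact that you attach the name \( \Phi \) to the plain restriction and \( S^*_- \) to the \( T \)-decorated functor, opposite to the paper's convention, is harmless for an equalizer).

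The genuine gap is in your final paragraph. From ``\( S^*_- \) and \( \Phi \) preserve pullbacks and agree on the cospan'' you conclude \( S^*_-P = \Phi P \) for the pointwise pullback \( P \). Preservation of pullbacks only yields a canonical isomorphism \( S^*_-P \cong \Phi P \), while membership in the equalizer requires strict equality, and here it genuinely fails: at the object \( x'_i \), the pointwise pullback of models takes the \emph{chosen} pullback
\begin{equation*}
  Mx'_i \times_{Lx'_i} Nx'_i \;=\; TMx_i \times_{TLx_i} TNx_i
\end{equation*}
in \( \cat B \), whereas lying in the equalizer demands that this value be literally \( T\bigl(Mx_i \times_{Lx_i} Nx_i\bigr) \); a cartesian monad preserves pullbacks only up to the canonical isomorphism, so these two objects need not coincide. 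Hence the strict equalizer is \emph{not} closed under the pointwise pullbacks of \( \Mod(\cat S,\cat B) \), and the inclusion does not create them --- this strict-versus-pseudo discrepancy is exactly why the paper goes on to introduce the pseudo-equalizer \( \cat P \) and the coherence result of Theorem \ref{teorema.muito.bonito}. The conclusion of the lemma is nevertheless true, but the argument must run in the other direction: construct the pullback directly in \( \Cat(T,\cat B) \) by taking the levelwise pullbacks \( P_0, P_1 \) in \( \cat B \), equipping them with the span \( TP_0 \leftarrow P_1 \to P_0 \) via the canonical isomorphism \( TP_0 \cong TM_0 \times_{TL_0} TN_0 \), and verifying the multicategory axioms and the universal property; then its image in \( \Mod(\cat S,\cat B) \) is pointwise a pullback in \( \cat B \) (using that \( T \) and \( TT \) preserve pullbacks), hence a pullback of models, since \( \Mod(\cat S,\cat B) \to [\cat S,\cat B] \) creates limits. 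Note the lemma accordingly claims only that the canonical functor \emph{preserves} pullbacks, not that it creates them.
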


\begin{remark}
  It might seem superfluous to require the right diagram of (\ref{pb.squares})
  to be a pullback, as the equalizer condition will force \( x'_i = Tx_i \)
  and \( x''_i = TTx_i \), and since \(T\) preserves pullbacks, the pullback
  condition for the aforementioned diagram is already guaranteed.

  Moreover, omitting this apparently redundant diagram, an analogous version
  of Lemma 3.1 would describe Burroni's notion of \(T\)-multicategories
  (check \cite{Bur71}, where this extra pullback condition is \textit{not}
  required), even when \(T\) is not cartesian, or even pullback-preserving.

  In spite of the above reasons, this requirement is justified by the sharper
  results we obtain about effective descent in \( \Mod(\cat S, \cat B) \) (see
  Proposition \ref{mod.descent}), and consequently, in \( \Cat(T, \cat B) \)
  as well (see Theorem \ref{first.theorem}). 
\end{remark}

Note that the inclusion \( \Mod(\cat S, \cat B) \to [\cat S, \cat B] \)
is an iso-inserter of categories of diagrams, thus it creates limits. 

The categories \( \cat S_I \), \( \cat S_T \), \( \cat S_{m_i} \), and \( \cat
S_{e_i} \) for \(i=0,1\) are subcategories of \( \cat S \), respectively given
by
\begin{equation*}
  \begin{tikzcd}
    x_0 \ar[r,shift left,"s_0"]
    & x_1 \ar[l,"d_0",shift left]
          \ar[d,"d_1",swap]
    & x_2 \ar[l,shift right,"d_0",swap]
          \ar[l,shift left,"d_1"] 
          \ar[d,"d_2"] \\
    & x'_0   
    & x'_1 \ar[l,"d'_0"]
  \end{tikzcd}
  \qquad
  \begin{tikzcd}
    x'_0   \ar[r,shift left,"s'_0"]
    & x'_1 \ar[d,"d'_1",swap]
           \ar[l,"d'_0",shift left]
    & x'_2 \ar[l,shift right,"d'_0",swap]
           \ar[l,shift left,"d'_1"] 
          \ar[d,"d'_2"] \\
    & x''_0   
    & x''_1 \ar[l,"d''_0"]
  \end{tikzcd}
\end{equation*}
\begin{equation*}
  \begin{tikzcd}
    x''_i \ar[r,"m_i"] & x'_i
  \end{tikzcd}
  \qquad
  \begin{tikzcd}
    x_i \ar[r,"e_i"] & x'_i
  \end{tikzcd}
\end{equation*}
and write \( S^*_I \), \( S^*_T \), \( S^*_{m_i} \), \( S^*_{e_i} \), 
for the restriction functors. Also write \( x_0^* \) and \( x_1^* \colon [\cat
S,\cat B] \to \cat B \) for the projections. With these, \( S^*_- \), and \(
\Phi \) are the uniquely determined functors given by the following
\begin{equation*}
  \begin{tikzcd}
    \left[\cat S, \cat B\right] \ar[r,"S_T^*"] 
                                \ar[d,"S_I^*",swap]
    & \left[\cat S_T, \cat B\right] \\
    \left[\cat S_I, \cat B\right] \ar[r,"\iso"]
    & \left[\cat S_T, \cat B\right] \ar[u,"T_*",swap]
  \end{tikzcd}
\end{equation*}
\begin{equation*}
  \begin{tikzcd}
    \left[\cat S, \cat B\right] \ar[rr,"S_{m_i}^*"] 
                                \ar[rd,"x_i^*",swap]
    && \left[\cat S_{m_i}, \cat B\right] \\
    & \cat B \ar[ru,"\hat m",swap]
  \end{tikzcd}
  \qquad
  \begin{tikzcd}
    \left[\cat S, \cat B\right] \ar[rr,"S_{e_i}^*"] 
                                \ar[rd,"x_i^*",swap]
    && \left[\cat S_{e_i}, \cat B\right] \\
    & \cat B \ar[ru,"\hat e",swap]
  \end{tikzcd}
\end{equation*}
where \(T_*\), \( \hat m \) and \( \hat e \) are the functors induced by the
monad \( T \). Note that these preserve pullbacks exactly when \( T \) is
cartesian. 

Note that, in general, the equalizer is a full subcategory of the
pseudo-equalizer: for functors \(F, G \colon \cat C \to \cat D\), the
category \( \PsEq(F,G) \) is the category whose objects are pairs \( (c,\phi)
\) where \(c\) is an object of \( \cat C \) and \( \iota \colon Fy \to Gy \)
is an isomorphism, and morphisms \( (c,\phi) \to (d,\psi) \) are morphisms \(
f \colon c \to d \) such that \( Gf \circ \phi = \psi \circ Ff \). Thus, the
full embedding may be given on objects by \( x \mapsto (x,\id) \). 

Henceforth, we denote
\begin{equation}
  \label{this.is.P}
  \cat P = \PsEq(S^*_- \circ I, \Phi \circ I).
\end{equation}

\begin{lemma}
  \label{embedding}
  The inclusion \( \Cat(T,\cat B) \to \cat P \) is full and preserves
  pullbacks.
\end{lemma}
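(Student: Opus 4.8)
The plan is to establish the two claimed properties of the inclusion $J \colon \Cat(T,\cat B) \to \cat P$ separately, since they are of rather different nature. Recall from Lemma \ref{multicat.as.eq} that $\Cat(T,\cat B)$ is the (strict) equalizer of $S^*_- \circ I$ and $\Phi \circ I$, and that the equalizer embeds as a full subcategory of the pseudo-equalizer $\cat P$ via $x \mapsto (x,\id)$, as spelled out in the paragraph preceding the statement. Thus the functor $J$ is, on objects, exactly this assignment sending a model in the equalizer to the pair consisting of that model together with the identity isomorphism, and on morphisms it is the identity assignment.

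First I would verify fullness. By the explicit description of morphisms in $\PsEq(F,G)$ recalled above, a morphism $(x,\id) \to (y,\id)$ in $\cat P$ is a morphism $f \colon x \to y$ in $\Mod(\cat S,\cat B)$ satisfying the compatibility condition $Gf \circ \id = \id \circ Ff$, i.e. $Gf = Ff$, where $F = S^*_- \circ I$ and $G = \Phi \circ I$. For objects $x,y$ lying in the equalizer we have $Fx = Gx$ and $Fy = Gy$ as the defining condition of the equalizer. Since $f$ is already an arbitrary $\Mod(\cat S, \cat B)$-morphism between objects of the equalizer, I would argue that the equalizer condition on objects forces the coherence square for $f$ to collapse to the tautology $Ff = Gf$ automatically — so \emph{every} morphism of $\Mod(\cat S, \cat B)$ between objects of $\Cat(T,\cat B)$ is a morphism in $\cat P$, and conversely every $\cat P$-morphism between such objects is such a $\Mod(\cat S,\cat B)$-morphism. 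This gives a bijection on hom-sets, hence fullness (faithfulness is immediate since $J$ is the identity on morphisms). The one point requiring care is checking that the coherence condition in $\cat P$ imposes no further constraint; this reduces to unwinding the definitions of $S^*_-$ and $\Phi$ on morphisms and using that both restrict to the same functor on objects of the equalizer.

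Second I would verify preservation of pullbacks. The inclusion $\Mod(\cat S,\cat B) \to [\cat S, \cat B]$ creates limits (noted after the remark, as it is an iso-inserter of diagram categories), and Lemma \ref{multicat.as.eq} already asserts that $\Cat(T,\cat B)$ has pullbacks and that the canonical functor $\Cat(T,\cat B) \to \Mod(\cat S, \cat B)$ preserves them. So pullbacks in $\Cat(T,\cat B)$ are computed as in $\Mod(\cat S,\cat B)$, hence pointwise in $[\cat S, \cat B]$. It then remains to check that $J$ sends such a pullback to a pullback in $\cat P$. I would use the general fact that $\PsEq(F,G)$, for pullback-preserving $F$ and $G$, has pullbacks created from those in $\cat C$: a cone in $\cat P$ is a limit precisely when its underlying cone in $\cat C = \Mod(\cat S,\cat B)$ is, with the coherence isomorphisms determined uniquely by functoriality of $F$ and $G$. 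Since the objects in the image of $J$ carry the identity coherence and the pullback in $\Mod(\cat S,\cat B)$ is preserved by both $F$ and $G$ (both being pullback-preserving), the induced isomorphism on the apex is again the identity, so the image cone is the pullback in $\cat P$.

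The main obstacle I anticipate is not conceptual but bookkeeping: confirming that the coherence datum on the pullback object produced in $\cat P$ genuinely agrees with $\id$ — equivalently, that $J$ lands back inside the equalizer on apexes — which rests on the compatibility of the chosen pullbacks in $\cat B$ with the functors $T_*$, $\hat m$, $\hat e$ that assemble $S^*_-$ and $\Phi$. This is exactly where the hypothesis that $T$ is cartesian does the work, since it is what makes these induced functors pullback-preserving (as noted after their definition). Once that compatibility is in hand, both the fullness and the pullback-preservation claims follow by direct unwinding, and I would present the argument compactly rather than verifying each sketch arrow individually.
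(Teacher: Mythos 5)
Your second half --- preservation of pullbacks --- is correct and is essentially the paper's own argument: by Lemma \ref{multicat.as.eq}, the inclusion \( \Cat(T,\cat B) \to \Mod(\cat S,\cat B) \) sends pullbacks to pullbacks, and the projection \( \cat P \to \Mod(\cat S,\cat B) \) creates them. One small recalibration: what forces the created coherence isomorphism on the apex to be the identity is not any strict compatibility of chosen pullbacks in \( \cat B \) with \( T_* \), \( \hat m \), \( \hat e \), but the fact that the whole pullback cone already lies in the strict equalizer (again Lemma \ref{multicat.as.eq}), so the identity satisfies the same defining equations as the comparison isomorphism, and uniqueness of maps into a pullback (here one uses that \( S^*_-\circ I \) and \( \Phi\circ I \) preserve pullbacks, i.e.\ that \( T \) is cartesian) forces the two to coincide.

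The fullness half, however, rests on a false claim. You assert that, for \( x,y \) in the equalizer, \emph{every} morphism \( f \in \Mod(\cat S,\cat B)(x,y) \) automatically satisfies \( Ff = Gf \), the equalizer condition on objects forcing the coherence square to collapse. It does not. Writing \( F = S^*_-\circ I \) and \( G = \Phi\circ I \), the equation \( Ff = Gf \) unwinds to \( f_{x'_i} = Tf_{x_i} \) (\( i=0,1,2 \)) and \( f_{x''_i} = TTf_{x_i} \) (\( i=0,1 \)), which is precisely the nontrivial condition distinguishing \( T \)-multicategory functors from arbitrary morphisms of underlying \( \cat S \)-models: the components \( f_{x'_i} \), \( f_{x''_i} \) are independent data, and naturality relates them to \( Tf_{x_i} \) only after composition with structure maps. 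For instance, naturality at \( e_0 \colon x_0 \to x'_0 \) gives \( f_{x'_0}\circ e_{x_0} = Tf_{x_0}\circ e_{x_0} \) (using naturality of the monad unit), which yields \( f_{x'_0} = Tf_{x_0} \) only when \( e_{x_0} \) is an epimorphism --- true for \( T = \id \), but false for, say, the free monoid monad. Indeed, if your claim held, \( \Cat(T,\cat B) \) would be a \emph{full} subcategory of \( \Mod(\cat S,\cat B) \), and one could apply Proposition \ref{subcat.descent} to that inclusion directly; the entire point of passing to the pseudo-equalizer \( \cat P \), and of Theorem \ref{teorema.muito.bonito}, is that the inclusion into \( \Mod(\cat S,\cat B) \) is \emph{not} full, whereas the inclusion into \( \cat P \) is. Fortunately, the conclusion you want holds for a simpler, essentially tautological reason, which is the observation the paper records just before defining \( \cat P \): a morphism \( (x,\id) \to (y,\id) \) in \( \cat P \) is by definition a morphism \( f \) of \( \Mod(\cat S,\cat B) \) with \( Gf\circ\id = \id\circ Ff \), i.e.\ \( Ff = Gf \), and this is literally the defining condition for \( f \) to be a morphism of the strict equalizer \( \Cat(T,\cat B) \). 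The two hom-sets coincide, and fullness follows with no appeal to naturality at all.
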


\begin{proof}
  The inclusion \( \Cat(T,\cat B) \to \Mod(\cat S, \cat B) \) preserves
  pullbacks, which are then created by \( \cat P \to \Mod(\cat S, \cat B) \).
\end{proof}

Given an object \((y,\iota)\) of \( \cat P \), \(\iota\) can be explicitly
described as a family of isomorphisms making the appropriate squares commute:
\begin{equation*}
  \begin{tikzcd}[column sep=large]
    y'_0   \ar[r,"s'_0",bend left]
          \ar[ddd,swap,"\iota^T_0"]
    & y'_1 \ar[l,"d'_0",bend left,swap]
          \ar[d,"d'_1"]
    & y'_2 \ar[l,"d'_1",shift left,near start] 
          \ar[l,"d'_0",shift right,swap,near start] 
          \ar[ddd,bend left=135,"\iota^T_2"] 
          \ar[d,"d'_2"] \\
    & y''_0 \ar[d,"\iota^T_3"]
    & y''_1 \ar[d,"\iota^T_4"] \\
    & Ty'_0 & Ty'_1 \\
    Ty_0 \ar[r,"Ts_0",bend left]
    & Ty_1 \ar[l,"Td_0",bend left]
           \ar[u,"Td_1"]
    & Ty_2 \ar[l,"Td_0",shift right,swap,,near start]
           \ar[l,"Td_1",shift left,near start]
           \ar[u,"Td_2"]
    \ar[ddd,bend left=135,"\iota^T_1",crossing over,from=1-2,to=4-2]
  \end{tikzcd}
\end{equation*}
\begin{equation*}
  \begin{tikzcd}
    y''_i \ar[d,swap,"\iota^{m_i}_0"] 
        \ar[r,"m_i"]
    & y'_i \ar[d,"\iota^{m_i}_1"] \\
    TTy_i \ar[r,"m"]
    & Ty_i
  \end{tikzcd}
  \qquad
  \begin{tikzcd}
    y_i \ar[r,"e_i"] \ar[d,swap,"\iota^{e_i}_0"]
    & y'_i \ar[d,"\iota^{e_i}_1"] \\
    y_i \ar[r,swap,"e"]
    & Ty_i
  \end{tikzcd}
\end{equation*}

\begin{lemma}
  An object \( (y,\iota) \) of \( \cat P \) is isomorphic to a
  \(T\)-multicategory if and only if the following coherence conditions hold:
  \begin{enumerate}[label=\emph{(\roman*)},noitemsep]
    \item
      \( \iota^{m_i}_1 = \iota^{e_i}_1 = \iota^T_i \) for \( i=0,1 \),

    \item
      \( T\iota^T_i \circ \iota^T_{3+i} = \iota^{m_i}_0 \), for \( i=0,1 \),
    
    \item
      \( \iota_0^{e_i} = \id \) for \( i=0,1 \),
  \end{enumerate}
\end{lemma}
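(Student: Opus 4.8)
My plan is to make the isomorphism explicit as a transport of structure and read the three coherence conditions off the pseudo-equalizer compatibility. A morphism in \( \cat P \) from \( (y,\iota) \) to a \(T\)-multicategory, embedded as \( (z,\id) \), is an isomorphism \( f \) of \( \cat S \)-models (with components \( f_i, f'_i, f''_i \) at the objects \( x_i, x'_i, x''_i \)) subject to \( \Phi(f)\circ\iota = S^*_-(f) \), since the second coordinate of the codomain is \( \id \). As \( z \) is a genuine \(T\)-multicategory we have \( z'_i = Tz_i \) and \( z''_i = TTz_i \), and writing this compatibility out against each of the five factors gives the system \( f'_i = Tf_i\circ\iota^T_i \) and \( f''_i = Tf'_i\circ\iota^T_{3+i} \) (from the \( \cat S_T \)-factor), \( f''_i = TTf_i\circ\iota^{m_i}_0 \) and \( f'_i = Tf_i\circ\iota^{m_i}_1 \) (from the \( m_i \)-factor), and \( f_i\circ\iota^{e_i}_0 = f_i \), \( f'_i = Tf_i\circ\iota^{e_i}_1 \) (from the \( e_i \)-factor).

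For the forward implication I would assume such an \( f \) exists and cancel isomorphisms. Because \( T \) preserves isomorphisms, each of \( f_i, Tf_i, TTf_i \) is invertible; comparing the three expressions for \( f'_i \) forces \( \iota^T_i = \iota^{m_i}_1 = \iota^{e_i}_1 \), which is (i); substituting \( f'_i = Tf_i\circ\iota^T_i \) into the two expressions for \( f''_i \) and cancelling \( TTf_i \) gives \( T\iota^T_i\circ\iota^T_{3+i} = \iota^{m_i}_0 \), which is (ii); and \( f_i\circ\iota^{e_i}_0 = f_i \) gives \( \iota^{e_i}_0 = \id \), which is (iii).

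For the converse I would assume (i), (ii), (iii) and define \( z \) by transporting \( y \) along the family \( f_i = \id \), \( f'_i = \iota^T_i \), \( f''_i = \iota^{m_i}_0 \). Then \( z \) is automatically a model of \( \cat S \) isomorphic to \( y \) (transport of structure preserves the defining relations and limit cones), so everything reduces to checking that \( z \) lies in the equalizer \( \Cat(T,\cat B) \), i.e. that its transported structure maps coincide with the canonical \( Td \), \( Ts_0 \), \( m \), \( e \). The degeneracy and the face maps among the primed objects collapse directly onto naturality squares of \( \iota^T \), and the identities \( m^z_i = m \), \( e^z_i = e \) follow from naturality of \( \iota^{m_i} \), \( \iota^{e_i} \) together with (i) and (iii). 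The step I expect to be the crux is the double-primed datum \( d''_0\colon x''_1 \to x''_0 \) (and the two legs landing in the \( x''_i \)): here \( f'' \) is prescribed through the \( m_i \)-factor by \( \iota^{m_i}_0 \), whereas naturality of \( \iota^T \) only mentions \( \iota^T_{3+i} \), so one must invoke (ii) to rewrite \( \iota^{m_i}_0 = T\iota^T_i\circ\iota^T_{3+i} \) and then compose two naturality squares of \( \iota^T \), keeping careful track of the transport direction, in order to recover \( TTd_0 \). Finally I would verify that this same \( f \) satisfies the compatibility system above, exhibiting the desired isomorphism \( (y,\iota)\iso(z,\id) \) in \( \cat P \).
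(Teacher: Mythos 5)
Your proposal is correct, but its organization differs from the paper's proof in both directions, so it is worth comparing. For ``coherent \(\Rightarrow\) isomorphic'', the paper stays in the span presentation of \(T\)-multicategories: it defines \(\hat y\) with \(\hat d_1 = \iota^T_0 \circ d_1\), \(\hat d_{2+i} = \iota^T_{1+i} \circ d_{2+i}\), shows the relevant composite rectangles are pullbacks, and then re-verifies the multicategory axioms by explicit diagram chases. Your version --- transport the entire \(\cat S\)-model along the family \((\id, \iota^T_i, \iota^{m_i}_0)\) and check only the finitely many equalizer equations --- makes the sketch relations and limit cones automatic, so each remaining equation is one naturality square of \(\iota\) plus one coherence condition; your diagnosis that the maps into the double-primed objects are the only place where (ii) enters parallels the computation the paper performs around diagram (\ref{rect.pb}). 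For ``isomorphic \(\Rightarrow\) coherent'', the paper does not argue directly: it appeals to Theorem \ref{teorema.muito.bonito}, proved immediately afterwards, which derives coherence from a pointwise \emph{epimorphism} \((x,\id) \to (y,\iota)\) by the same componentwise cancellation you perform, but with epimorphisms in place of isomorphisms. Your argument is self-contained and type-correct, but note what the stronger epimorphism version buys: it is exactly what Lemma \ref{part.2} needs later, since reflecting effective descent morphisms via Corollary \ref{mmc} only supplies an epimorphism onto \((y,\iota)\), never an isomorphism, so your iso-only cancellation could not replace the theorem there.
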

Such an object \( (y,\iota) \) satisfying these conditions is said to be
\textit{coherent}.

\begin{proof}
  Given a coherent \( (y,\iota) \), we define a \(T\)-multicategory \( \hat y
  \) such that \( \hat y_0 = y_0 \), \( \hat y_1 = y_1 \), and we consider
  the span
  \begin{equation*}
    \begin{tikzcd}
      Ty_0 & y_1 \ar[l,"\iota^T_0 \circ d_1",swap]
                \ar[r,"d_0"]
           & y_0,
    \end{tikzcd}
  \end{equation*}
  so that we have \( \hat d_1 = \iota^T_0 \circ d_1 \), \( \hat d_0 = d_0 \),
  and we let \( \hat d_1 = d_1 \colon x_2 \to x_1 \) and \( \hat s_0 = s_0
  \colon x_0 \to x_1 \). 

  Consider the diagram for \( i=0,1 \):
  \begin{equation*}
    \begin{tikzcd}
      y_{2+i} \ar[d,swap,"d_0"]
              \ar[r,"d_{2+i}"]
      & y'_{1+i} \ar[d,"d'_0"]
                 \ar[r,"\iota^T_{1+i}"]
      & Ty_{1+i} \ar[d,"Td_0"] \\
      y_{1+i} \ar[r,swap,"d_{1+i}"]
      & y'_i \ar[r,swap,"\iota^T_i"]
      & Ty_i
    \end{tikzcd}
  \end{equation*}
  The right square is a pullback because \( \iota^T_j \) is an isomorphism
  for \( j=0,1,2 \), and the left square is a pullback by definition,
  therefore the outer rectangle is a pullback as well. 

  Let \( \hat d_0 = d_0 \colon y_{2+i} \to y_{1+i} \) and \( \hat d_{2+i} =
  \iota^T_{1+i} \circ d_{2+i} \colon y_{2+i} \to y_{1+i} \) for \( i = 0,1
  \). We claim that every triangle commutes:
  \vspace{-1cm}
  \begin{multicols}{2}
    \begin{equation}
      \label{left}
      \begin{tikzcd}
        & y_2 \ar[ld,swap,"m \circ T\hat d_1 \circ \hat d_2"]
              \ar[rd,"d_0 \circ d_0"]
              \ar[dd,"d_1"] \\
        Ty_0 && y_0 \\
        & y_1 \ar[ul,"\hat d_1"]
              \ar[ur,"d_0",swap]
      \end{tikzcd}
    \end{equation} \break
    \begin{equation}
      \label{right}
      \begin{tikzcd}
        & y_0 \ar[ld,swap,"e"]
              \ar[rd,equals]
              \ar[dd,"s_0"] \\
        Ty_0 && y_0 \\
        & y_1 \ar[ul,"\hat d_1"]
              \ar[ur,"d_0",swap]
      \end{tikzcd}
    \end{equation}
  \end{multicols}
  Of course, both right triangles commute by definition. Moreover, we have
  that the diagram
  \begin{equation}
    \label{rect.pb}
    \begin{tikzcd}
      y_2 \ar[r,"d_2"]
      & y'_1 \ar[r,"d'_1"] \ar[d,swap,"\iota^T_1"]
      & y''_0 \ar[r,"M"] \ar[rd,"\iota^m_0"] \ar[d,"\iota^T_3"]
      & y'_0 \ar[rd,"\iota^m_1"] \\
      & Ty_1 \ar[r,swap,"Td_1"] 
      & Ty'_0 \ar[r,swap,"T\iota^T_0"]
      & TTy_0 \ar[r,swap,"m"]
      & Ty_0
    \end{tikzcd}
  \end{equation}
  commutes by the naturality of \( \iota \) and coherence of \( (y,\iota) \).

  Since \( M \circ d'_1 \circ d_2 = d_1 \circ d_1 \) by definition, the 
  left triangle of (\ref{left}) commutes. The left triangle of (\ref{right})
  also commutes, for we have \( e = \iota^{e_0}_1 \circ e_0 \),
  \(\iota^{e_0}_1 = \iota^T_0 \) and \(e_0 = d_1 \circ s_0 \).

  We claim it is possible to define
  \begin{itemize}[noitemsep]
    \renewcommand\labelitemi{--}
    \item
      \( \hat s_0 = (\id, Ts_0 \circ \hat d_1) \)

    \item
      \( \hat s_1 = (s_0 \circ d_0, e) \)

    \item
      \( \hat d_2 = (d_1 \circ d_0, m \circ T\hat d_2 \circ \hat d_3) \)

    \item
      \( \hat d_1 = (d_0 \circ d_0, Td_1 \circ \hat d_3) \)
  \end{itemize}
  and in order to verify our claim, we must show that
  \begin{itemize}[noitemsep]
    \renewcommand\labelitemi{--}
    \item
      \( \hat d_1 = Td_0 \circ Ts_0 \circ \hat d_1 \),
    \item
      \( \hat d_1 \circ s_0 \circ d_0 = Td_0 \circ e \),
    \item
      \( \hat d_1 \circ d_1 \circ d_0  
          = Td_0 \circ m \circ T\hat d_2 \circ \hat d_3 \),
    \item
      \( \hat d_1 \circ d_0 \circ d_0 = Td_0 \circ Td_1 \circ \hat d_3 \),
  \end{itemize}
  Since \(d_0 \circ s_0 \) is the identity, the first equation is satisfied.
  We have
  \begin{equation*}
    \iota^T_0 \circ d_1 \circ s_0 \circ d_0 
      = \iota^{e_0}_1 \circ e_0 \circ d_0
      = e \circ d_0 = Td_0 \circ e,
  \end{equation*}
  which verifies the second. For the third and fourth, we have
  \begin{align*}
    \iota^T_0 \circ d_1 \circ d_0 \circ d_0
      &= \iota^T_0 \circ d'_0 \circ d_2 \circ d_0 \\
      &= Td_0 \circ \iota^T_1 \circ d'_0 \circ d_3 \\
      &= Td_0 \circ Td_0 \circ \iota^T_2 \circ d_3 \\
      &= Td_0 \circ Td_1 \circ \hat d_3, \\
    Td_0 \circ m \circ T\iota^T_1 \circ Td_2 \circ \iota^T_2 \circ d_3
      &= m \circ TTd_0 \circ T\iota^T_1 \circ Td_2 \circ \iota^T_2 \circ d_3\\
      &= m \circ T\iota^T_0 \circ Td'_0 \circ Td_2 \circ \iota^T_2 \circ d_3\\
      &= m \circ T\iota^T_0 \circ Td_1 \circ Td_0 \circ \iota^T_2 \circ d_3\\
      &= m \circ T\iota^T_0 \circ Td_1 \circ \iota^T_1 \circ d'_0 \circ d_3\\
      &= m \circ T\iota^T_0 \circ \iota^T_3 \circ d'_1 \circ d_2 \circ d_0\\
      &= m \circ \iota^{m_0}_0 \circ d'_1 \circ d_2 \circ d_0 \\
      &= \iota^{m_0}_1 \circ m_0 \circ d'_1 \circ d_2 \circ d_0 \\
      &= \iota^{m_0}_1 \circ d_1 \circ d_1 \circ d_0,
  \end{align*}
  as desired. 

  Recalling that the left square in (\ref{rect.pb}) is a pullback for \(i=0,
  1\), it follows that \( s_0, s_1 \colon x_1 \to x_2 \) and \( d_1, d_2
  \colon x_3 \to x_2 \) are given by \( (\id, s'_0 \circ d_1) \), \( (s_0
  \circ d_0, e_1) \), \( (d_1 \circ d_0, m_1 \circ d'_2 \circ d_3) \) and \(
  (d_0 \circ d_0, d'_1 \circ d_3) \), respectively. But these are just \( \hat
  s_0, \hat s_1, \hat d_1, \hat d_2 \), respectively. 

  The converse is implied by the result that follows.
\end{proof}

\begin{theorem}
  \label{teorema.muito.bonito}
  If \( f \colon (x,\id) \to (y,\iota) \) is a pointwise epimorphism in \(
  \cat P \), then \( (y,\iota) \) is coherent. Hence, \( (y, \iota) \) is
  isomorphic to a \(T\)-multicategory.
\end{theorem}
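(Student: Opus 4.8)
The plan is to unwind the defining equation of a morphism of the pseudo-equalizer \( \cat P \) and then cancel \( f \) on the right of each resulting identity. Recall that a morphism \( (c,\phi) \to (d,\psi) \) of a pseudo-equalizer \( \PsEq(F,G) \) is a map \( f \) with \( Gf \circ \phi = \psi \circ Ff \); in our case \( F = S^*_- \circ I \) and \( G = \Phi \circ I \), and since the source \( (x,\id) \) carries the identity isomorphism the condition collapses to \( \Phi(f) = \iota \circ S^*_-(f) \). This is a family of identities in \( \cat B \) indexed by the objects of the subcategories \( \cat S_T \), \( \cat S_{m_i} \), \( \cat S_{e_i} \) (\( i=0,1 \)), so the first step is to read off, component by component, the maps \( \Phi(f) \) and \( S^*_-(f) \). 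Writing \( f_j, f'_j, f''_j \) for the components of \( f \) at \( x_j, x'_j, x''_j \) — each an epimorphism, as \( f \) is a pointwise epimorphism — I expect the relevant identities to be
\begin{equation*}
  Tf_i = \iota^T_i \circ f'_i = \iota^{m_i}_1 \circ f'_i = \iota^{e_i}_1 \circ f'_i,
  \qquad
  f_i = \iota^{e_i}_0 \circ f_i,
\end{equation*}
arising from the objects \( x'_i \) (on \( \cat S_T \), \( \cat S_{m_i} \), \( \cat S_{e_i} \)) and \( x_i \) (on \( \cat S_{e_i} \)), together with
\begin{equation*}
  Tf'_i = \iota^T_{3+i} \circ f''_i,
  \qquad
  TTf_i = \iota^{m_i}_0 \circ f''_i,
\end{equation*}
arising from the object \( x''_i \) (on \( \cat S_T \) and on \( \cat S_{m_i} \)).

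Granting these, the three coherence conditions fall out by right-cancellation of epimorphisms. Condition (i) is immediate: the three maps \( \iota^T_i \circ f'_i \), \( \iota^{m_i}_1 \circ f'_i \), \( \iota^{e_i}_1 \circ f'_i \) all equal \( Tf_i \), and cancelling the epimorphism \( f'_i \) yields \( \iota^T_i = \iota^{m_i}_1 = \iota^{e_i}_1 \). Condition (iii) follows from \( f_i = \iota^{e_i}_0 \circ f_i \) and epicness of \( f_i \). For condition (ii) I would compute
\begin{align*}
  T\iota^T_i \circ \iota^T_{3+i} \circ f''_i
    &= T\iota^T_i \circ Tf'_i = T(\iota^T_i \circ f'_i) \\
    &= T(Tf_i) = TTf_i = \iota^{m_i}_0 \circ f''_i,
\end{align*}
and cancel \( f''_i \) to obtain \( T\iota^T_i \circ \iota^T_{3+i} = \iota^{m_i}_0 \). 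This shows \( (y,\iota) \) is coherent, and the concluding claim that \( (y,\iota) \) is isomorphic to a \(T\)-multicategory is then exactly the preceding coherence lemma.

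The calculations themselves are routine; the real work — and the only place I expect to stumble — is the bookkeeping in the first step. One must match each object of each subcategory to the correct component of \( f \) and track the action of \( \Phi \) carefully: because \( \Phi \) restricted to \( \cat S_T \) is \( T_* \circ (\iso) \circ S^*_I \), its component at \( x''_i \) is \( Tf'_i \) rather than \( TTf_i \), which is exactly what produces the shifted index \( 3+i \) and makes the chain in (ii) close up. Getting this indexing right, and confirming that the epimorphism needed for each cancellation (\( f'_i \) for (i), \( f_i \) for (iii), \( f''_i \) for (ii)) is genuinely supplied by the pointwise-epi hypothesis, is the crux of the argument.
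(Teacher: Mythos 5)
Your proof is correct and follows essentially the same route as the paper's: unwind the morphism condition \( Gf = \iota \circ Ff \) of \( \cat P \) into componentwise equations over \( \cat S_T \), \( \cat S_{m_i} \), \( \cat S_{e_i} \), then recover the three coherences by cancelling epimorphisms, using functoriality of \( T \) for condition (ii). The only (immaterial) difference is one of arrangement: the paper writes the equations with the isomorphisms on the other side (e.g. \( f'_i = \iota^T_i \circ Tf_i \)), so it must additionally note that \( Tf_i \) and \( TTf_i \) are epimorphisms, whereas your form of the equations lets you cancel the components \( f_i, f'_i, f''_i \) supplied directly by the pointwise-epi hypothesis.
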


\begin{proof}
  A morphism \( f \colon (x,\id) \to (y,\iota) \) is a morphism \( f \colon x
  \to y \) such that \( Gf = \iota \circ Ff \), which translates to the
  following equations:
  \begingroup
  \allowdisplaybreaks
  \begin{align*}
    f_0 &= \iota^{e_0}_0 \circ f_0  && 
    &f'_0 = \iota^T_0 \circ Tf_0 
          = \iota^{m_0}_1 \circ Tf_0
          = \iota^{e_0}_1 \circ Tf_0 \\
    f_1 &= \iota^{e_1}_0 \circ f_1  && 
    &f'_1 = \iota^T_1 \circ Tf_1 
          = \iota^{m_1}_1 \circ Tf_1 
          = \iota^{e_1}_1 \circ Tf_1 \\
    &&& &f'_2 = \iota^T_2 \circ Tf_2 \\
    f''_0 &= \iota^T_3 \circ Tf'_0 
           = \iota^{m_0}_0 \circ TTf_0 \\
    f''_1 &= \iota^T_4 \circ Tf'_1 
           = \iota^{m_1}_0 \circ TTf_1 
  \end{align*}
  \endgroup
  and noting that \( f_i, f'_i, f''_i \) all are epimorphisms for all \(i\)
  we recover the coherences; just note that \(Tf_i\) and \(TTf_i\) are
  epimorphisms as well, and that \( Tf'_i = T\iota^T_i \circ TTf_i \).
\end{proof}

  \section{Descent via bilimits}
    \label{First approach}
    Recall the pseudo-equalizer \( \cat P \) defined in (\ref{this.is.P}) from
the previous section. We understand the effective descent morphisms of \( \cat
P \) via the effective descent morphisms of \( \Mod(\cat S, \cat B) \) by the
following instance of \cite[Theorem 9.2]{Luc18}. 

\begin{proposition}
  \label{pseq.descent}
  Suppose that we have a pseudo-equalizer of categories and
  pullback-preserving functors
  \begin{equation*}
    \begin{tikzcd}
      \PsEq(F,G) \ar[r,"I"]
      & \cat C \ar[r,shift left,"F"] \ar[r,shift right,"G",swap]
      & \cat D
    \end{tikzcd}
  \end{equation*}
  and let \(f\) be a morphism in the pseudo-equalizer. Then \(f\) is effective
  for descent whenever \(If\) is effective for descent and \( FIf \iso GIf \)
  is a pullback-stable regular epimorphism.
\end{proposition}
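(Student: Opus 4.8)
The plan is to prove directly that the Eilenberg--Moore comparison functor $\mathcal K^f \colon \PsEq(F,G)\comma \mathrm{cod}(f) \to \Desc(f)$ is an equivalence, by establishing that it is fully faithful and essentially surjective. I will use throughout that the projection $I \colon \PsEq(F,G) \to \cat C$ is an iso-inserter, hence creates limits; in particular the chosen pullbacks of $\PsEq(F,G)$ are computed by $I$ from those of $\cat C$, so the monad $f^\ast f_!$ lies over $(If)^\ast(If)_!$ and $I$ restricts to a functor $\Desc(f) \to \Desc(If)$. Postcomposing with $F$ and $G$, which preserve pullbacks, sends a descent datum for $If$ to descent data for $FIf$ and $GIf$ in $\cat D$; since every object and morphism occurring in a descent datum $(a,\gamma)$ already carries the coherence isomorphisms of $\PsEq(F,G)$, these two images are canonically isomorphic over the identification $FIf \iso GIf$.

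\emph{Full faithfulness} (i.e.\ $f$ is a descent morphism). By the argument in the proof of Proposition~\ref{uniregepi.is.descent}, $\mathcal K^f$ is fully faithful precisely when the counit $\epsilon$ of $f_! \dashv f^\ast$ is a pointwise regular epimorphism, and $\epsilon$ is computed pointwise by pulling back $f$. Applying $I$ to such a pullback yields the corresponding pullback of $If$, a regular epimorphism because $If$ is effective for descent (in particular a pullback-stable regular epimorphism, by Proposition~\ref{uniregepi.is.descent}); applying $F$ yields a pullback of $FIf \iso GIf$, again a regular epimorphism by hypothesis. The technical heart is that a morphism of $\PsEq(F,G)$ is a regular epimorphism as soon as its underlying $\cat C$-morphism and its $F$-image in $\cat D$ are, which is where the iso-inserter structure and the pullback-preservation of $F, G$ combine; granting this, $\epsilon$ is a pointwise regular epimorphism and $f$ is a descent morphism.

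\emph{Essential surjectivity.} Here I would invoke Corollary~\ref{Lemma11}: it suffices, for each descent datum $(a,\gamma)$ of $f$, to produce $g$ with $f^\ast g \iso a$ and $\epsilon_g \circ \gamma = \epsilon_g \circ \epsilon_{f \circ a}$. Push $(a,\gamma)$ down to the datum $(Ia, I\gamma)$ of $If$; since $If$ is effective for descent, Corollary~\ref{Lemma11} supplies $g_0$ in $\cat C$ gluing it, so that $(If)^\ast g_0 \iso Ia$ with the matching identity. Because $F$ and $G$ preserve pullbacks, $Fg_0$ and $Gg_0$ glue the images $F(Ia,I\gamma)$ and $G(Ia,I\gamma)$, which are isomorphic as descent data for $FIf \iso GIf$. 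The latter is a pullback-stable regular epimorphism, hence a descent morphism by Proposition~\ref{uniregepi.is.descent}, so the comparison $\mathcal K^{FIf}$ is fully faithful; the isomorphism of descent data therefore descends to a unique isomorphism $\omega \colon F(\mathrm{dom}\,g_0) \to G(\mathrm{dom}\,g_0)$ satisfying the compatibility $G g_0 \circ \omega = \psi \circ F g_0$ with the codomain datum $\psi$ of $f$. This $\omega$ exhibits $\mathrm{dom}\,g_0$ as an object of $\PsEq(F,G)$ and $g_0$ as a morphism $g$ there, and by construction $f^\ast g \iso a$ with the required cocycle identity, so Corollary~\ref{Lemma11} applies.

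\emph{Main obstacle.} I expect two points to demand care. The first is the characterization used in the full faithfulness step: showing that $I$ reflects pointwise regular epimorphisms under the stated hypotheses on $F, G$ is the genuine content, and is exactly the place where the machinery of \cite{Luc18} is being specialized. The second is verifying that the isomorphism $\omega$ produced from the descent property of $FIf$ is coherent enough for the lifted $g$ to recover $(a,\gamma)$ as a datum \emph{in} $\PsEq(F,G)$, and not merely its underlying $\cat C$-datum; the full faithfulness of $\mathcal K^{FIf}$ is what pins $\omega$ down, forcing the reconstructed coherence isomorphisms to agree with those of $(a,\gamma)$.
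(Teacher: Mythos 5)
Your proposal is necessarily a different route from the paper's, because the paper does not prove Proposition \ref{pseq.descent} at all: it is stated as an instance of the general bilimit-descent result \cite[Theorem 9.2]{Luc18}, whose proof runs through the machinery of pseudo-Kan extensions. What you propose instead is a direct, self-contained verification, and its overall shape is correct. In particular, your essential surjectivity step is exactly how the general argument specializes here: glue the underlying datum $(Ia, I\gamma)$ in $\cat C$ using effective descent of $If$, then use that $FIf \iso GIf$ is a descent morphism, so that $\mathcal K^{FIf}$ is fully faithful, to transport the coherence isomorphism of $(a,\gamma)$ to a unique isomorphism $\omega \colon F(\mathrm{dom}\,g_0) \to G(\mathrm{dom}\,g_0)$ exhibiting $g_0$ as a morphism of $\PsEq(F,G)$ with $f^*g \iso a$ coherently. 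A proof along these lines has real value as a complement to the citation, since it makes visible exactly where each hypothesis is used.

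However, as written the proof has a genuine gap, at precisely the point you flag yourself: the ``technical heart'' of the full faithfulness step --- that a morphism of $\PsEq(F,G)$ is a regular epimorphism as soon as its underlying $\cat C$-morphism is one and its $F$-image is an epimorphism --- is only asserted (``granting this''), yet it is the one place where the iso-inserter structure genuinely does work. The claim is true, and the missing argument is short, so you should supply it: since $I$ creates pullbacks, the kernel pair $(r,s)$ of $q \colon (c,\phi) \to (d,\psi)$ lifts to $\PsEq(F,G)$, and a regular epimorphism with a kernel pair is the coequalizer of that kernel pair; given $t \colon (c,\phi) \to (e,\chi)$ with $tr = ts$, the unique $u$ in $\cat C$ with $uq = t$ satisfies $(Gu \circ \psi) \circ Fq = Gu \circ Gq \circ \phi = Gt \circ \phi = \chi \circ Ft = (\chi \circ Fu) \circ Fq$, and cancelling the epimorphism $Fq$ shows $Gu \circ \psi = \chi \circ Fu$, i.e.\ $u$ lies in $\PsEq(F,G)$. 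Note that only \emph{epimorphy} of $Fq$ is needed, and the counit components have $F$-images that are pullbacks of $FIf$, hence epimorphisms by hypothesis. A second, smaller issue: you repeatedly invoke Proposition \ref{uniregepi.is.descent}, which is stated for categories with \emph{finite limits}, whereas here $\cat C$, $\cat D$ and $\PsEq(F,G)$ are only assumed to have pullbacks (and $\PsEq(F,G)$ need not inherit a terminal object even when $\cat C$ and $\cat D$ have one). To be safe, the regular-epimorphism arguments should be run in the slice categories (which do have finite limits, with kernel pairs computed as in the base), or the needed implications of Proposition \ref{uniregepi.is.descent} verified directly in the pullbacks-only setting. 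With the lemma proved and this adjustment made, your argument goes through.
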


Furthermore, by the work of \cite{Cre99}, we are able to provide sufficient
conditions for effective descent in \( \Mod(\cat S, \cat B) \) for \( \cat B
\) with finite limits:

\begin{proposition}
  \label{mod.descent}
  If a morphism \(p\) in \( \Mod(\cat S, \cat B) \) is such that
  \begin{itemize}[noitemsep]
    \renewcommand\labelitemi{--}
    \item
      \( p_0, p_1, p'_0, p'_1, p''_0, p''_1 \) are effective descent
      morphisms in \( \cat B \),
    \item
      \( p_2, p'_2 \) are descent morphisms in \( \cat B \),
    \item
      \( p_3 \) is an almost descent morphism in \( \cat B \),
  \end{itemize}
  then \(p\) is an effective descent morphism in \( \Mod(\cat S, \cat B) \).
\end{proposition}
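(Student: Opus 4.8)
The plan is to transcribe Le Creurer's analysis of descent for models of a finite limit sketch \cite{Cre99} to the sketch \( \cat S \), exploiting the way its objects are layered by pullback-depth. First I would record the shape of \( \Desc(p) \). Since the inclusion \( \Mod(\cat S,\cat B)\hookrightarrow[\cat S,\cat B] \) creates limits and pullbacks in a diagram category are computed pointwise, the change-of-base and the descent monad are computed node by node; consequently a descent datum for \(p\) unpacks into a family of descent data \( (a_\alpha,\gamma_\alpha) \) for the component morphisms \( p_\alpha \) in \( \cat B \), one for each node \( \alpha \) of \( \cat S \), natural with respect to every generating arrow and relation of \( \cat S \) and whose underlying object is a model (so that the cone conditions \eqref{pb.squares} already hold for it). The task is then to show that \( \mathcal K^p \) is fully faithful and essentially surjective.

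Partition the nodes of \( \cat S \) by pullback-depth: the generating objects \( x_0,x_1,x_0',x_1',x_0'',x_1'' \) at depth \(0\), the single pullbacks \( x_2,x_2' \) at depth \(1\), and the double pullback \( x_3 \) at depth \(2\) (recall \( x_2=x_1\times_{x_0'}x_1' \), \( x_2'=x_1'\times_{x_0''}x_1'' \) and \( x_3=x_2\times_{x_1'}x_2' \)). For essential surjectivity I would, given a descent datum, first use the effective descent of \( p_\alpha \) at each depth-\(0\) node together with Corollary \ref{Lemma11} to produce objects \( E_\alpha\to y_\alpha \) in \( \cat B \) realizing \( (a_\alpha,\gamma_\alpha) \). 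The depth-\(1\) and depth-\(2\) objects are then \emph{not} obtained from \( p_2,p_2',p_3 \) directly --- crucially, a pullback of effective descent morphisms need not be effective for descent --- but are built as the pullbacks \( E_2=E_1\times_{E_0'}E_1' \), \( E_2'=E_1'\times_{E_0''}E_1'' \) and \( E_3=E_2\times_{E_1'}E_2' \) over \(y\), once the relevant generating structure arrows have been transported to the \( E_\alpha \). Because change-of-base preserves pullbacks and the underlying object of the datum is a model, these reconstructed pullbacks pull back along \(p\) to \( a_2,a_2',a_3 \).

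The graded hypotheses enter exactly when we assemble the \( E_\alpha \) into a single model \(E\) over \(y\) and verify it realizes the datum. Generating arrows with codomain a depth-\(0\) node are lifted uniquely using that the component comparison there is fully faithful (a consequence of descent, hence of effective descent), while arrows into a pullback node are induced by its universal property. Descent of \( p_2 \) and \( p_2' \) is what guarantees that the reconstructed cones at the single-pullback nodes are genuine limits compatible with the descent data, and that the structure maps out of \( x_2,x_2' \) --- notably the composition-type arrows \( d_1\colon x_2\to x_1 \) and \( d_1'\colon x_2'\to x_1' \) --- agree with those prescribed by \( \gamma \); almost descent of \( p_3 \) is needed only to \emph{reflect} the associativity-type relations such as \( d_j\circ d_{1+i}=d_i\circ d_j \), which are equations between morphisms out of \( x_3 \) that may be checked after applying the faithful component comparison. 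Full faithfulness of \( \mathcal K^p \) follows from the same componentwise bookkeeping, the depth-\(0\) conditions giving faithfulness and the depth-\(0\) and depth-\(1\) conditions giving fullness.

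The main obstacle is precisely this coherence: one must choose the reconstructing isomorphisms \( p_\alpha^*E_\alpha\iso a_\alpha \) \emph{simultaneously natural} in all generating arrows and compatible with all the relations of \( \cat S \), so that the \( E_\alpha \) glue into an honest model rather than an incoherent family, and so that the verification of the cone conditions \eqref{pb.squares} and the cosimplicial-type relations never demands more descent strength than is assumed at a given node. Carrying out this bookkeeping, and confirming that depth-\(0\) effectivity, depth-\(1\) descent and depth-\(2\) almost descent are each exactly sufficient for their layer, is the heart of the argument and the content we import from \cite[Section~3.2]{Cre99}.
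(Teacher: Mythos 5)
Your proposal is correct and takes essentially the same route as the paper: both reduce the statement to Le Creurer's Section 3.2, the paper by exhibiting \( \cat S \) as an essentially algebraic theory (your depth-\(0\) nodes as sorts, \( x_2, x_2' \) as domains of the partial composition operations, \( x_3 \) as the domain of the associativity equation) and then citing \cite[Proposition~3.2.4]{Cre99}, which packages exactly the graded reconstruction you outline. The only cosmetic difference is that you sketch the internals of that proposition (pullback-depth layering, reconstruction at sorts, lifting of operations, reflection of equations) rather than invoking it as a black box, while still deferring the same bookkeeping to the same source.
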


\begin{proof}
  We refer the reader to Section 3.2 \textit{ibid} if they wish to fill in the
  details. The sketch \( \cat S \) may be given as an essentially algebraic
  theory with sorts \( x_0 \), \(x_1\), \(x_0'\), \(x_1'\), \(x_0''\), \(
  x_1'' \), partially defined operations \( d_1 \colon x_1 \times x_1' \to x_1
  \), \( d_1' \colon x'_1 \times x''_1 \to x'_1 \), and equation \(
  (d_1 \circ (\id,d_1), d_1 \circ (d_1,\id)) \colon x_1 \times x'_1 \times
  x''_1 \to x_1 \), among other data and equations. Then apply Proposition
  3.2.4 \textit{ibid}.
\end{proof}

With \(T\) cartesian, diagram (\ref{bieq}) is a pseudo-equalizer, so we are
under the hypothesis of Proposition 4.1. Therefore:

\begin{lemma}
  \label{part.1}
  A morphism \(p\) in \( \cat P \) is effective for descent whenever \(p\) is
  effective for descent in \( \Mod(\cat S, \cat B) \) and \( S^*_Xp \) is a
  descent morphism for each \( X = T, m_0, m_1, e_0, e_1 \).

  In particular, if \( p \) satisfies the conditions in Proposition
  \ref{mod.descent}, then \( p \) is effective for descent in the
  pseudo-equalizer.
\end{lemma}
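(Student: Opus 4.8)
The plan is to apply Proposition \ref{pseq.descent} directly to the presentation \( \cat P = \PsEq(S^*_- \circ I, \Phi \circ I) \) of \eqref{this.is.P}. In the notation of that proposition I would take \( \cat C = \Mod(\cat S, \cat B) \), let \( \cat D \) be the product \( [\cat S_T, \cat B] \times [\cat S_{m_0}, \cat B] \times [\cat S_{m_1}, \cat B] \times [\cat S_{e_0}, \cat B] \times [\cat S_{e_1}, \cat B] \), and set \( F = S^*_- \circ I \), \( G = \Phi \circ I \). Because \( T \) is cartesian the induced functors \( T_*, \hat m, \hat e \) preserve pullbacks, so \( S^*_- \) and \( \Phi \) do as well and \eqref{bieq} is a pseudo-equalizer of pullback-preserving functors, as already noted before the statement; thus Proposition \ref{pseq.descent} applies. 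Writing \( I' \colon \cat P \to \Mod(\cat S, \cat B) \) for the projection, \( I'p \) is just the underlying morphism of models of \(p\).

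The first hypothesis of Proposition \ref{pseq.descent}, that \( I'p \) be effective for descent in \( \Mod(\cat S, \cat B) \), is verbatim the first hypothesis of the lemma. For the second I must show that \( F(I'p) \iso G(I'p) \) is a pullback-stable regular epimorphism in \( \cat D \); the isomorphism is the coherence datum carried by the pseudo-equalizer object \(p\), so it suffices to treat \( F(I'p) = (S^*_- \circ I)(p) \). Since \( \cat D \) is a product of categories, a morphism in it is a pullback-stable regular epimorphism precisely when each of its factors is, and the \(X\)-factor of \( F(I'p) \) is exactly \( S^*_X p \in [\cat S_X, \cat B] \). As \( \cat B \), and hence each \( [\cat S_X, \cat B] \), has finite limits, Proposition \ref{uniregepi.is.descent} identifies the pullback-stable regular epimorphisms of \( [\cat S_X, \cat B] \) with its descent morphisms. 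The second hypothesis of the lemma is therefore exactly the second condition of Proposition \ref{pseq.descent}, and the conclusion follows.

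For the final assertion I would assume the hypotheses of Proposition \ref{mod.descent}, which immediately gives that \(p\) is effective for descent in \( \Mod(\cat S, \cat B) \), i.e. the first condition. For the second, effective descent morphisms are in particular descent morphisms, so \( p_0, p_1, p'_0, p'_1, p''_0, p''_1 \) are descent morphisms in \( \cat B \), and \( p_2 \) is one by assumption; reading the objects of \( \cat S_T, \cat S_{m_i}, \cat S_{e_i} \), these are exactly the \( \cat B \)-components of \(p\) selected by \( S^*_T, S^*_{m_i}, S^*_{e_i} \). Since finite limits and regular epimorphisms in \( [\cat S_X, \cat B] \) are computed pointwise, a natural transformation whose components are all pullback-stable regular epimorphisms is again one; hence each \( S^*_X p \) is a descent morphism in \( [\cat S_X, \cat B] \), which is the second condition. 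Note that \( p'_2 \) and \( p_3 \) are not used here, contributing only to the effective-descent hypothesis in \( \Mod(\cat S, \cat B) \).

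The step I expect to require the most care is the passage between descent in the diagram categories \( [\cat S_X, \cat B] \) and descent of the individual \( \cat B \)-components \( p_i, p'_i, p''_i \): one must confirm that pullbacks and regular epimorphisms, and therefore descent morphisms via Proposition \ref{uniregepi.is.descent}, are detected objectwise in \( [\cat S_X, \cat B] \), and then correctly match the enumerated components to the objects of each subcategory \( \cat S_X \).
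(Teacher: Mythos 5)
Your overall strategy is exactly the paper's: apply Proposition \ref{pseq.descent} to the presentation \eqref{this.is.P}, reduce (pullback-stable regular epi = descent) in the product category to its factors, and reduce descent in each \( [\cat S_X, \cat B] \) to pointwise descent in \( \cat B \) via Proposition \ref{uniregepi.is.descent} and the pointwise computation of pullbacks and regular epis in diagram categories. The paper's proof is a three-line version of this; your write-up fills in the same details correctly for the first assertion of the lemma.

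However, your component bookkeeping in the ``in particular'' part is backwards. The subcategory \( \cat S_T \) is the \emph{primed} copy of \( \cat S_I \): its objects are \( x'_0, x'_1, x'_2, x''_0, x''_1 \), so the components of \( S^*_T p \) are \( p'_0, p'_1, p'_2, p''_0, p''_1 \). In particular \( p'_2 \) \emph{is} needed for the second condition, and it is supplied precisely by the hypothesis of Proposition \ref{mod.descent} that \( p'_2 \) is a descent morphism. Conversely, \( x_2 \) is not an object of any \( \cat S_X \) for \( X = T, m_0, m_1, e_0, e_1 \) (it lies only in \( \cat S_I \), which enters the pseudo-equalizer through \( \Phi \), not through the restrictions \( S^*_X \)), so \( p_2 \), like \( p_3 \), plays no role in the second condition and contributes only to effective descent in \( \Mod(\cat S, \cat B) \). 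Your closing remark asserts exactly the opposite, and your enumeration verifies the wrong component: taken literally, it leaves the \( p'_2 \)-component of \( S^*_T p \) unchecked. The conclusion you draw is still valid, because Proposition \ref{mod.descent} assumes that both \( p_2 \) and \( p'_2 \) are descent morphisms, but the matching of components to objects of the \( \cat S_X \) --- the step you yourself flagged as requiring the most care --- should be corrected.
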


\begin{proof}
  We observe that a morphism in a product of categories is of descent if and
  only if each component is a descent morphism. Moreover, pointwise
  (effective) descent in \( [\cat S, \cat B] \) implies pointwise descent in
  \( [\cat S_X, \cat B] \) for every \(X\). Therefore, the result follows by
  Proposition \ref{pseq.descent}. 
\end{proof}

By Lemma \ref{embedding}, we may apply the previous proposition to \( U
\colon \Cat(T, \cat B) \to \cat P \). Consequently, we can show that:

\begin{lemma}
  \label{part.2} 
  \( U \) reflects effective descent morphisms.
\end{lemma}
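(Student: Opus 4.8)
The plan is to deduce the statement from Corollary~\ref{mmc} applied to the inclusion \( U \colon \Cat(T,\cat B) \to \cat P \), which is fully faithful and preserves pullbacks by Lemma~\ref{embedding}. According to that corollary, reflection of effective descent morphisms follows once I show: for every object \( z \) of \( \cat P \) admitting an effective descent morphism \( g \colon Ux \to z \) out of the image of a \(T\)-multicategory \( x \), there is an isomorphism \( z \iso Uy \) with \( y \) a \(T\)-multicategory. Since \( Ux = (x,\id) \), this is precisely the conclusion of Theorem~\ref{teorema.muito.bonito}, \emph{provided} I can check that such a \( g \) is a pointwise epimorphism in \( \cat P \). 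Thus the whole argument reduces to a single claim about effective descent morphisms in \( \cat P \).

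First I would record that \( \cat P \) has finite limits: pullbacks are created along \( \cat P \to \Mod(\cat S,\cat B) \), and the terminal object is created along \( \cat P \to \Mod(\cat S,\cat B) \hookrightarrow [\cat S,\cat B] \). Hence Proposition~\ref{uniregepi.is.descent} applies, and \( g \), being effective for descent, is in particular a pullback-stable (indeed regular) epimorphism in \( \cat P \), whose kernel pair is computed pointwise since pullbacks in \( \cat P \) are pointwise. It then remains to transport this epimorphism to each node: writing \( P_c \colon \cat P \to \cat B \) for the projection at an object \( c \) of the graph~(\ref{almost.cosimp}), I must show that every \( P_c \) sends \( g \) to an epimorphism, which would give pointwise epi-ness and let Theorem~\ref{teorema.muito.bonito} and Corollary~\ref{mmc} close the proof.

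The main obstacle is exactly this transport, because \( P_c \) preserves pullbacks but a left-exact functor need not preserve epimorphisms, so pullback-stability of \( g \) alone does not suffice. I would resolve it by factoring \( P_c \) as \( \cat P \to \Mod(\cat S,\cat B) \hookrightarrow [\cat S,\cat B] \xrightarrow{\ \mathrm{ev}_c\ } \cat B \) and handling the two difficult points separately. The evaluation \( \mathrm{ev}_c \) is a left adjoint, its right adjoint being the pointwise right Kan extension, which exists because \( \cat B \) has finite products and \( \cat S \) is a finite sketch with finite hom-sets; consequently \( \mathrm{ev}_c \) preserves epimorphisms, and so an epimorphism in \( [\cat S,\cat B] \) is automatically pointwise. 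The delicate factor is therefore the passage through \( \Mod(\cat S,\cat B) \), where I would exploit that this inclusion creates limits, together with regularity of \( g \) and the pointwise description of its kernel pair, to reduce epi-ness of \( g \) to the functor-category level \( [\cat S,\cat B] \). I expect this reduction — controlling epimorphisms across the model inclusion — to be the crux of the proof; everything else is formal.
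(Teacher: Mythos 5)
Your top-level reduction is exactly the paper's: both arguments apply Corollary \ref{mmc} to \( U \) and then invoke Theorem \ref{teorema.muito.bonito}, so everything hinges on the claim that an effective descent morphism \( g \colon Ux \to z \) in \( \cat P \) is a \emph{pointwise} epimorphism. The paper dispatches this in one line (``every effective descent morphism is an epimorphism''), read pointwise; you, by contrast, correctly flag that there is real distance between ``epimorphism in \( \cat P \)'' and ``epimorphism at each node of (\ref{almost.cosimp})'', since the projections \( \cat P \to \cat B \) are merely pullback-preserving. But your proposal does not close that distance: the transfer across \( \cat P \to \Mod(\cat S,\cat B) \hookrightarrow [\cat S,\cat B] \) is where all the content lies, and you end by declaring it ``the crux'' you expect to handle, with no argument given. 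Note moreover that the first of these two functors is only faithful, and faithful functors need not preserve epimorphisms (nor need regular epimorphisms of \( \cat P \) be sent to regular epimorphisms of models), so it is not only the inclusion of models that is delicate. As submitted, the proposal is a correct reduction plus an unproven claim; the lemma is not proved.

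There is also a concrete false step. You assert that \( \cat P \) has a terminal object created along \( \cat P \to \Mod(\cat S,\cat B) \hookrightarrow [\cat S,\cat B] \), in order to invoke Proposition \ref{uniregepi.is.descent} inside \( \cat P \). This fails: an iso-inserter creates those limits that both functors preserve, and \( \Phi \circ I \) does not preserve terminal objects, precisely because it applies \( T \). Concretely, the constant-at-terminal model (which is terminal in \( [\cat S,\cat B] \) and in \( \Mod(\cat S,\cat B) \)) underlies no object of \( \cat P \): the \( e_i \)- and \( m_i \)-components of the required isomorphism \( \iota \) would be isomorphisms \( 1 \iso T1 \) and \( 1 \iso TT1 \), which fail for, e.g., the free monoid monad. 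So \( \cat P \) is not known to be finitely complete, and Proposition \ref{uniregepi.is.descent} cannot be applied there as stated; any argument that \( g \) is a (pullback-stable, regular, or pointwise) epimorphism must proceed differently --- which is exactly the missing piece above.
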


\begin{proof}
  Since every effective descent morphism is an epimorphism, the result follows
  by Theorem \ref{teorema.muito.bonito} and Corollary \ref{mmc}.
\end{proof}

Combining Lemmas \ref{part.1} and \ref{part.2}, we get our main result:

\begin{theorem}
  \label{first.theorem}
  For \( \cat B \) with finite limits, let \( p \colon x \to z \) be a
  \(T\)-multicategory functor internal to \( \cat B \). If \( Tp_1 \) is an
  effective descent morphism, \( Tp_2 \) is a descent morphism and \( p_3 \)
  is an almost descent morphism in \( \cat B \), then \(p\) is an effective
  descent morphism in \( \Cat(T,\cat V) \).
\end{theorem}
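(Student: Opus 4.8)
The plan is to deduce the statement by combining Lemmas~\ref{part.1} and~\ref{part.2}, reducing everything to a componentwise check in \(\cat B\). By Lemma~\ref{part.2} the full embedding \(U \colon \Cat(T,\cat B) \to \cat P\) reflects effective descent morphisms, so it suffices to show that \(Up\) is an effective descent morphism in \(\cat P\). By Lemma~\ref{part.1} this holds once the underlying morphism of \(\Mod(\cat S, \cat B)\) is effective for descent there and each \(S^*_X p\), for \(X = T, m_0, m_1, e_0, e_1\), is a descent morphism. For the first requirement I would invoke Proposition~\ref{mod.descent}; thus the whole problem becomes the verification of its componentwise hypotheses, keeping in mind that under the equalizer identification of Lemma~\ref{multicat.as.eq} one has \(p'_i = Tp_i\) and \(p''_i = TTp_i\). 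Concretely, I must upgrade the three given hypotheses into: \(p_0, p_1, Tp_0, Tp_1, TTp_0, TTp_1\) effective descent, \(p_2\) and \(Tp_2\) descent, and \(p_3\) almost descent.

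The propagation is driven by the cartesian structure of \(T\). For any morphism \(f\) of \(\cat B\), the naturality square of the unit \(e\) at \(f\) is a pullback realizing \(f\) as a pullback of \(Tf\), and the naturality square of the multiplication \(m\) at \(f\) is a pullback realizing \(TTf\) as a pullback of \(Tf\). Since effective descent, descent and almost descent morphisms are all pullback-stable (the latter two by Proposition~\ref{uniregepi.is.descent}, and effective descent as already used in the proof of Corollary~\ref{mmc}), evaluating these two squares at \(p_0, p_1, p_2\) yields at once: from \(Tp_1\) effective descent, both \(p_1\) and \(TTp_1\) effective descent; from \(Tp_2\) descent, \(p_2\) descent; and, as soon as \(Tp_0\) is known to be effective descent, both \(p_0\) and \(TTp_0\) effective descent. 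The remaining data \(Tp_1\), \(Tp_2\) and \(p_3\) are given.

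The one genuinely missing link --- and the step I expect to be the main obstacle --- is to establish that \(Tp_0\) is effective for descent. Here the cartesian squares are useless, since they place \(p_0\) below \(Tp_0\), not beside \(Tp_1\). Instead I would use the sketch relation \(d_0 \circ s_0 = \id\): together with the compatibility of the functor \(p\) with \(s_0\) and \(d_0\), it exhibits \(p_0\) as a retract of \(p_1\) in the arrow category \(\cat B^{\to}\), and applying \(T\) exhibits \(Tp_0\) as a retract of \(Tp_1\). It then suffices to know that each of the three descent classes is closed under retracts. For almost descent and descent this is routine: a retract of a (regular) epimorphism is again one, and the retract datum is itself stable under pullback, so pullback-stability is inherited. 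The hard part is effective descent. The cleanest route I see is to note that the Eilenberg--Moore comparison is pseudonatural in the morphism, so the retract realizes \(\mathcal K^{Tp_0}\) as a retract of the comparison \(\mathcal K^{Tp_1}\) in the \(2\)-category of categories; since isomorphisms are closed under retracts in the homotopy category, and equivalences are exactly the maps that become invertible there, \(\mathcal K^{Tp_0}\) is an equivalence. Alternatively one can argue by hand with Corollary~\ref{Lemma11}, transporting a descent datum for \(Tp_0\) along the retract, realizing it through the effectivity of \(Tp_1\), and pushing the result back along the splitting. Granting this, \(Tp_0\) is effective descent, and Paragraph~2 then delivers \(p_0\) and \(TTp_0\) as well.

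With every componentwise hypothesis in hand, Proposition~\ref{mod.descent} gives that \(p\) is an effective descent morphism in \(\Mod(\cat S, \cat B)\). The auxiliary descent conditions of Lemma~\ref{part.1} are then automatic: every component occurring in some \(S^*_X p\) --- namely \(p_0, p_1, p_2, Tp_0, Tp_1, TTp_0, TTp_1\) --- has just been shown to be at least a descent morphism, and a morphism in a product of diagram categories is of descent exactly when each of its components is. Hence \(Up\) is effective for descent in \(\cat P\) by Lemma~\ref{part.1}, and therefore \(p\) is effective for descent in \(\Cat(T, \cat B)\) by Lemma~\ref{part.2}, as required.
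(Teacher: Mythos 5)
Your proposal is correct, and its skeleton is exactly the paper's: reduce along Lemma \ref{part.2}, apply Lemma \ref{part.1}, and verify the hypotheses of Proposition \ref{mod.descent} by propagating the three given conditions through the cartesian naturality squares of \(e\) and \(m\) (your propagation step is precisely Lemma \ref{create.image.stable} and its corollary in Appendix \ref{Appendix}). You also correctly isolate the genuine crux: \(Tp_0\) must be shown effective for descent before the naturality squares can deliver \(p_0\) and \(TTp_0\).

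Where you diverge is in how you discharge that crux, and here your route is sound but heavier than necessary. The paper's Lemma \ref{lemmaB3}, applied to the \(T\)-graph morphism with components \((Tp_0, Tp_1)\), argues by composition and cancellation: \(Td_0 \colon Tx_1 \to Tx_0\) is a split epimorphism (section \(Ts_0\)), hence effective for descent, so \(Tp_0 \circ Td_0 = Td_0 \circ Tp_1\) is a composite of effective descent morphisms, and the classical cancellation property (if \(g \circ f\) is effective for descent and \(f\) is a descent morphism, then so is \(g\); cf. \cite{JT94, JST04}) yields \(Tp_0\). Your retract formulation --- \(Tp_0\) is a retract of \(Tp_1\) in the arrow category via \((Ts_0,Ts_0)\) and \((Td_0,Td_0)\) --- is correct, but your justification of retract-closure for the effective descent class rests on the pseudonaturality of the Eilenberg--Moore comparison over \emph{arbitrary} commutative squares, which requires Beck--Chevalley mate calculus and is established neither in the paper nor in your sketch. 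The cheap observation is that in any retract diagram in the arrow category the two components of the retraction are themselves split epimorphisms, so retract-closure for all three classes already follows from the same composition-plus-cancellation properties; the homotopy-category detour (and the only-sketched alternative via Corollary \ref{Lemma11}) is avoidable. Two minor further points: retracts of regular epimorphisms are regular epimorphisms because \(\cat B\) has kernel pairs, which deserves mention; and in the final check for Lemma \ref{part.1} the components of the \(S^*_X p\) are \(p_0, p_1, Tp_0, Tp_1, Tp_2, TTp_0, TTp_1\) --- your list includes \(p_2\) (which occurs in no \(\cat S_X\)) and omits \(Tp_2\) (which occurs in \(\cat S_T\) but is a descent morphism by hypothesis), so the conclusion is unaffected.
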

\begin{proof}
  By the results in Appendix \ref{Appendix}, (observe that \( Tp_1 \) is a
  \(T\)-graph morphism), we guarantee that \(p\) is an effective descent
  morphism in \( \Mod(\cat S, \cat B) \). Now apply Theorem \ref{part.2}.
\end{proof}

  \section{Descent via sketches}
    \label{Second approach}
    In this section, we extend the techniques of \cite[Chapter 3]{Cre99} to give
refined sufficient conditions for (effective) descent morphisms in \(
\Cat(T,\cat B) \) in the broader sense of Burroni; that is, without requiring
\(T\) to be cartesian (though we require \(T\) to preserve kernel pairs for
Theorem \ref{main.theorem.2}), while keeping the definition of
\(T\)-multicategory intact. We highlight that given a functor \( p \colon x
\to y \) of internal multicategories, if \( p_1 \) is a pullback-stable
(regular) epimorphism, or of effective descent, then so is \( p_0 \) by Lemma
\ref{lemmaB3}.

\begin{lemma}
  Let \(p \colon x \to y\) be a functor of internal \(T\)-multicategories.
  If \(p_1\) is an (pullback-stable) epimorphism in \( \cat B \), then so is
  \(p\) in \( \Cat(T,\cat B) \).
\end{lemma}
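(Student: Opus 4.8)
The plan is to reduce everything to components and exploit one structural fact: the unit law provides a section of the source map. Recall that a functor of internal $T$-multicategories is completely determined by its pair of components $(p_0,p_1)$, so two parallel functors $f,g \colon y \to z$ coincide precisely when $f_0 = g_0$ and $f_1 = g_1$. The key observation I would isolate first is that the unit morphism $s_0 \colon x_0 \to x_1$ satisfies $d_0 \circ s_0 = \id_{x_0}$ in every $T$-multicategory, so that $d_0 \colon x_1 \to x_0$ is always a \emph{split} epimorphism. This is what makes the otherwise inaccessible component $p_0$ cancellable.

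For the plain epimorphism case, suppose $f \circ p = g \circ p$ for functors $f, g \colon y \to z$. Comparing components gives $f_1 \circ p_1 = g_1 \circ p_1$ and $f_0 \circ p_0 = g_0 \circ p_0$. Since $p_1$ is an epimorphism, the first equation yields $f_1 = g_1$ at once. To obtain $f_0 = g_0$, I would use the functoriality condition $d_0 \circ f_1 = f_0 \circ d_0$ and its analogue for $g$: from $f_1 = g_1$ we get $f_0 \circ d_0 = g_0 \circ d_0$ on $y$, and since $d_0$ is a split, hence genuine, epimorphism we conclude $f_0 = g_0$. Thus $f = g$ and $p$ is an epimorphism in $\Cat(T,\cat B)$. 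Equivalently, one may first deduce that $p_0$ is an epimorphism from $p_1$ via Lemma~\ref{lemmaB3} and cancel $p_0$ on the right directly.

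For the pullback-stable version, I would use that the canonical functor $\Cat(T,\cat B) \to \Mod(\cat S,\cat B) \to [\cat S,\cat B]$ preserves pullbacks (Lemma~\ref{multicat.as.eq}, together with the fact that the iso-inserter $\Mod(\cat S,\cat B) \to [\cat S,\cat B]$ creates limits) and that limits in the diagram category $[\cat S,\cat B]$ are computed pointwise. Hence, for any functor $q \colon z \to y$, the base-change $q^*p$ has its $1$-component equal to the pullback of $p_1$ along $q_1$ in $\cat B$; if $p_1$ is pullback-stable epi, this component is again epi. Applying the previous paragraph to $q^*p$, whose codomain is a genuine $T$-multicategory (so its $d_0$ is still split epi), shows $q^*p$ is an epimorphism, and therefore $p$ is pullback-stable epi. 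The arguments are short, and there is no serious obstacle: the only points requiring care are the reduction to components via the split-epi structure of $d_0$, and the levelwise computation of pullbacks in $\Cat(T,\cat B)$, both of which are supplied by the results already established.
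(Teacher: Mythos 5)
Your proof is correct and follows essentially the same route as the paper: compare components, use that $p_1$ (and hence, via the split epimorphism $d_0$, the $0$-component data) can be cancelled, and deduce pullback-stability from the pointwise computation of pullbacks in $\Cat(T,\cat B)$. The only difference is cosmetic: the paper cancels $p_0$ directly (implicitly invoking Lemma~\ref{lemmaB3}, which is exactly your split-epi observation about $d_0 \circ s_0 = \id$), whereas you cancel $d_0$ after establishing $f_1 = g_1$; the two are interchangeable.
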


\begin{proof}
  Given functors \(q,r\) such that \( qp = rp \), we have \( q_ip_i = r_ip_i
  \), and therefore \( q_i = r_i \) for \( i = 0,\, 1\), hence \( q = r \),
  thus \(p\) is an epimorphism. Since pullbacks are calculated pointwise, \( p
  \) must be pullback-stable whenever \(p_1\) is.
\end{proof}

\begin{lemma}
  Let \(p\) be a functor of internal \(T\)-multicategories. If 
  \begin{itemize}
    \renewcommand\labelitemi{--}
    \item
      \(p_1\) is a (pullback-stable) regular epimorphism in \( \cat B \), 
    \item
      \(p_2\) is an (pullback-stable) epimorphism in \( \cat B \),
  \end{itemize}
  then \(p\) is a (pullback-stable) regular epimorphism in \( \Cat(T,\cat B)
  \).
\end{lemma}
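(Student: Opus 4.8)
The plan is to exhibit $p$ as the coequalizer of its own kernel pair, which makes it a regular epimorphism by definition. I would write $(\pi_1,\pi_2)\colon R \rightrightarrows x$ for the kernel pair of $p$ in $\Cat(T,\cat B)$. Being a pullback, it is computed pointwise on the underlying $T$-graph, so its components $R_0,R_1$ are the kernel pairs of $p_0$ and $p_1$ in $\cat B$. By Lemma \ref{lemmaB3}, $p_0$ is itself a (pullback-stable) regular epimorphism, so both $p_0$ and $p_1$ are coequalizers of their respective kernel pairs. Hence, given any functor $q\colon x\to m$ with $q\pi_1=q\pi_2$, passing to components gives $q_i\pi_1^i=q_i\pi_2^i$, and the universal properties of $p_0,p_1$ produce unique $h_0\colon y_0\to m_0$ and $h_1\colon y_1\to m_1$ with $h_0p_0=q_0$ and $h_1p_1=q_1$.

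Next I would check that $h=(h_0,h_1)$ is a $T$-multicategory functor $y\to m$; uniqueness of the factorization is then automatic, as $p_0,p_1$ are epimorphisms. The source/target and unit laws follow by cancelling these epimorphisms: for instance $d_1h_1p_1=d_1q_1=(Tq_0)d_1=(Th_0)(Tp_0)d_1=(Th_0)d_1p_1$, and cancelling $p_1$ yields $d_1h_1=(Th_0)d_1$; the $d_0$ law is analogous (cancelling $p_1$), and $h_1s_0=s_0h_0$ follows by cancelling $p_0$. With the source/target law established, the induced morphism $h_2\colon y_2\to m_2$ on composable pairs is well-defined, and the pullback description of $m_2$ forces $h_2p_2=q_2$.

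The crux is preservation of composition, and this is exactly where the hypothesis on $p_2$ is needed. Writing $\mu_x\colon x_2\to x_1$ for the composition morphism, functoriality of $p$ and $q$ reads $\mu_yp_2=p_1\mu_x$ and $\mu_mq_2=q_1\mu_x$. Then
\begin{equation*}
  h_1\mu_yp_2=h_1p_1\mu_x=q_1\mu_x=\mu_mq_2=\mu_mh_2p_2,
\end{equation*}
and since $p_2$ is an epimorphism I may cancel it to obtain $h_1\mu_y=\mu_mh_2$, so $h$ preserves composition. This gives the required factorization, exhibiting $p$ as the coequalizer of its kernel pair and hence as a regular epimorphism.

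For the pullback-stable version I would pull $p$ back along an arbitrary functor into $y$; since pullbacks in $\Cat(T,\cat B)$ are computed pointwise, the first component of the pullback is a pullback of $p_1$ and thus remains a regular epimorphism, while the component on composable pairs remains an epimorphism by the pullback-stability of $p_2$, so the non-stable case just proved applies to the pullback. The main obstacle in the whole argument is precisely this composition step: every other condition reduces to cancelling the epimorphisms $p_0,p_1$, whereas preservation of composition genuinely requires $p_2$ to be (pullback-stably) epic, which is why the hypothesis is phrased as it is.
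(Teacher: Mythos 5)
Your proof is correct and follows essentially the same route as the paper's: it factors $q$ through the kernel pair of $p$ using that $p_0$ (via Lemma \ref{lemmaB3}) and $p_1$ are coequalizers of their kernel pairs, verifies functoriality of the induced morphism by cancelling the epimorphisms $p_0, p_1, p_2$ --- with $p_2$ needed precisely for the composition law --- and deduces the pullback-stable case from pointwise computation of pullbacks. The only difference is that you make explicit what the paper leaves implicit (the appeal to Lemma \ref{lemmaB3} for $p_0$, the identity law, and uniqueness of the factorization).
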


\begin{proof}
  Consider the kernel pair \( r,s \) of \( p \), and let \( q \colon x \to z
  \) be a functor such that \( q \circ r = q \circ s \). Then there exist
  unique morphisms \(k_0\), \(k_1\) such that \( k_ip_i = q_i \) for \( i =
  0,\, 1 \). We claim these morphisms define a functor \( y \to z \). We have
  \begin{align}
    d_1 \circ k_1 \circ p_1 &= d_1 \circ q_1 
                             = Tq_0 \circ d_1 
                             = Tk_0 \circ Tp_0 \circ d_1 
                             = Tk_0 \circ d_1 \circ p_1 \\
    d_0 \circ k_1 \circ p_1 &= d_0 \circ q_1 
                             = q_0 \circ d_0 
                             = k_0 \circ p_0 \circ d_0 
                             = k_0 \circ d_0 \circ p_1 \\
    k_1 \circ d_1 \circ p_2 &= k_1 \circ p_1 \circ d_1 
                             = q_1 \circ d_1 
                             = d_1 \circ q_2 
                             = d_1 \circ k_2 \circ p_2,
  \end{align}
  and since \(p_1,p_2\) are epimorphisms, cancellation allows us to conclude
  that \(k\) is a functor (we note that \(k_2\) is defined as \( k_2(g,f) =
  (k_1g,k_1f) \), and hence \( q_2 = k_2p_2 \)).

  Again, pointwise calculation of pullbacks guarantees pullback stability.
\end{proof}

\begin{theorem}
  \label{main.theorem.2}
  Let \(p\) be a functor of internal \(T\)-multicategories, and assume \(T\)
  preserves kernel pairs. If 
  \begin{itemize}
    \renewcommand\labelitemi{--}
    \item 
      \(p_1\) is an effective descent morphism in \( \cat B \),
    \item 
      \(p_2\) is a descent morphism in \( \cat B \), 
    \item
      \(p_3\) is an almost descent morphism in \( \cat B \),
  \end{itemize}
  then \(p\) is effective for descent in \( \Cat(T,\cat B) \).
\end{theorem}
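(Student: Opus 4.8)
The plan is to prove the two halves of effective descent separately: that the comparison functor \( \mathcal K^p \) is fully faithful, and that it is essentially surjective.

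Full faithfulness is immediate from the two preceding lemmas. Since \( p_1 \) is an effective descent morphism it is a pullback-stable regular epimorphism, and since \( p_2 \) is a descent morphism it is a pullback-stable regular epimorphism, and in particular a pullback-stable epimorphism (Proposition \ref{uniregepi.is.descent}). The second lemma above then yields that \( p \) is a pullback-stable regular epimorphism in \( \Cat(T,\cat B) \), so, again by Proposition \ref{uniregepi.is.descent}, \( p \) is a descent morphism and \( \mathcal K^p \) is fully faithful.

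For essential surjectivity I would adapt Le Creurer's reconstruction. A descent datum for \( p \) restricts, along the functors sending a multicategory to its object \( x_i \), to descent data \( (a_i,\gamma_i) \) for \( p_i \) at each sort \( i=0,1,2,3 \), linked by the structure morphisms --- which are morphisms of descent data --- and by the \(T\)-multicategory equations. Treating \( x_0,x_1 \) as basic and \( x_2,x_3 \) as the derived pullbacks \( x_1\times_{Tx_0}Tx_1 \) and \( x_2\times_{Tx_1}Tx_2 \), I would first reconstruct an object-of-objects \( w_0 \) and an object-of-arrows \( w_1 \): \( p_1 \) is effective for descent by hypothesis and \( p_0 \) is effective for descent by Lemma \ref{lemmaB3}, so the two basic data lie in the images of their comparison functors. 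I would then \emph{define} \( w_2 \) and \( w_3 \) as the corresponding pullbacks in \( \cat B \), rather than reconstruct them. Since change-of-base preserves pullbacks, descending the composition operation \( w_2\to w_1 \) requires only the full faithfulness granted by \( p_2 \) being of descent, while the associativity law --- an equation of morphisms out of the top sort \( w_3 \) --- can be checked after pulling back along \( p_3 \) and then cancelled, using that \( p_3 \) almost descent makes it a pullback-stable epimorphism.

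The hard part, and the reason the hypothesis that \( T \) preserves kernel pairs cannot be dropped, is the reconstruction of the \(T\)-twisted structure maps, foremost \( \widehat d_1\colon w_1\to Tw_0 \), whose codomain is \( T \) of a reconstructed sort. Because the descent datum \( (a_0,\gamma_0) \) is glued along the kernel pair of \( p_0 \), and \( T \) preserves that kernel pair, applying \( T \) produces a genuine \( Tp_0 \)-descent datum whose reconstruction is compatible with \( Tw_0 \); this is precisely what lets the structure map \( d_1 \) of the total multicategory descend to a map \( w_1\to Tw_0 \) over \( \bar d_1\colon y_1\to Ty_0 \), and similarly for the \(T\)- and \(TT\)-decorated maps appearing in the composition. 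Assembling the reconstructed sorts, structure maps, and units, and verifying the equations, yields a \(T\)-multicategory \( w \) with \( p^*w \) isomorphic to the given descent datum, which by Corollary \ref{Lemma11} finishes essential surjectivity. I expect the delicate point to be exactly the bookkeeping showing that preservation of kernel pairs alone --- rather than of all pullbacks along \( p_0 \) --- suffices to transport the effective descent reconstruction through \( T \).
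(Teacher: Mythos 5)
Your overall skeleton coincides with the paper's: full faithfulness exactly as you argue, and essential surjectivity by reconstructing \( w_0, w_1 \) from effective descent of \( p_0 \) (via Lemma \ref{lemmaB3}) and \( p_1 \), defining \( w_2, w_3 \) as pullbacks, descending the remaining structure, and closing with Corollary \ref{Lemma11}. However, the two places where you localize the technical difficulty are both misplaced, and neither step goes through as you describe. For the \( T \)-twisted map \( \widehat d_1 \colon w_1 \to Tw_0 \): applying \( T \) to the descent datum \( (a_0,\gamma_0) \) does \emph{not} yield a \( Tp_0 \)-descent datum from kernel-pair preservation alone, because the relevant pullback \( u_0 = x_0 \times_{y_0} v_0 \) is a pullback along \( p_0 \), not a kernel pair; and even granting the datum, ``transporting the reconstruction through \( T \)'' would require \( Tp_0 \) to be an effective descent morphism, which is precisely the kind of hypothesis (on \( Tp_i \)) that Theorem \ref{first.theorem} needs and Theorem \ref{main.theorem.2} is designed to avoid. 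In fact this step needs \emph{no} hypothesis on \( T \): in the paper, \( \widehat d_1 \) is obtained from the coequalizer \( u_1 \rightrightarrows v_1 \to w_1 \) in \( \cat B \) (exact because \( h_1 \) is a pullback of the pullback-stable regular epimorphism \( p_1 \)), since \( Th_0 \circ d_1 \colon v_1 \to Tw_0 \) coequalizes the kernel pair by applying \( T \) to the equation \( h_0 \circ \gamma_0 = h_0 \circ \epsilon_{p_0 \circ a_0} \) --- mere functoriality.

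The kernel-pair hypothesis is actually consumed by the step you dismiss as routine: descending the composition to \( w_2 = w_1 \times_{Tw_0} Tw_1 \). To invoke full faithfulness of \( \mathcal K^{p_2} \) you must first identify \( p_2^* f_2 \) with \( a_2 \), i.e.\ \( x_2 \times_{y_2} w_2 \iso v_2 \). Interchanging limits gives \( x_2 \times_{y_2} w_2 \iso v_1 \times_{(Tx_0 \times_{Ty_0} Tw_0)} (Tx_1 \times_{Ty_1} Tw_1) \), and collapsing this to \( v_1 \times_{Tv_0} Tv_1 = v_2 \) requires \( T \) to preserve the pullbacks \( v_i \iso x_i \times_{y_i} w_i \), which are not kernel pairs --- this is exactly the cartesianness one is not allowed to assume here, so ``change-of-base preserves pullbacks'' does not rescue the identification. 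The paper's proof avoids this entirely: it works with \( h_2 = (h_1 \circ d_0,\, Th_1 \circ d_2) \colon v_2 \to w_2 \) and computes its kernel pair by interchange, \( v_2 \times_{w_2} v_2 \iso (v_1 \times_{w_1} v_1) \times_{(Tv_0 \times_{Tw_0} Tv_0)} (Tv_1 \times_{Tw_1} Tv_1) \); since \( u_i = v_i \times_{w_i} v_i \) \emph{is} a kernel pair, preservation of kernel pairs yields \( Tv_i \times_{Tw_i} Tv_i \iso Tu_i \), whence \( v_2 \times_{w_2} v_2 \iso u_1 \times_{Tu_0} Tu_1 = u_2 \). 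Together with \( h_2 \) being a regular epimorphism (from \( p_2 \)), this makes the row \( u_2 \rightrightarrows v_2 \to w_2 \) exact, so the composition descends through the coequalizer, and the axioms are then verified by cancellation along the epimorphisms \( h_2, h_3 \) supplied by \( p_2, p_3 \). So your proposal has the right scaffolding but inverts the roles of its two key ingredients, and the mechanisms you propose for each would fail where they are placed.
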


\begin{proof}
  By the previous lemma, and Proposition \ref{uniregepi.is.descent}, the
  comparison functor \( \mathcal K^p \) is fully faithful. Hence, we aim to
  prove that \( \mathcal K^p \) is also essentially surjective under our
  hypotheses, thereby concluding that \(p\) is effective for descent. 

  Suppose we are given a \(p^*p_!\)-algebra \( (a,\gamma) \), where \( a
  \colon v \to x \) is a functor and \(\gamma \colon u \to v \) is the
  algebra structure. We have equivalences \( \mathcal K_i \colon \cat B \comma
  y_i \to \Desc(p_i) \), for \( i=0,\,1 \), and \( (a,\gamma) \) then
  determines algebras \( (a_i,\gamma_i) \) for \( i=0,1 \). Hence, there exist
  \( f_i \colon w_i \to y_i \) and \( h_i \colon v_i \to w_i \) such that the
  following diagram
  \begin{equation*}
    \begin{tikzcd}
      v_i \ar[r,"h_i"] \ar[d,swap,"a_i"] & w_i \ar[d,"f_i"] \\
      x_i \ar[r,swap,"p_i"] & y_i
    \end{tikzcd}
  \end{equation*}
  is a pullback square, and moreover, we have \( h_i\circ \gamma_i = h_i \circ
  \epsilon_{p_i \circ a_i} \). We claim that
  \begin{itemize}[noitemsep]
    \renewcommand\labelitemi{--}
    \item
      \( h_0, h_1 \) determine a functor \( h \colon v \to w \),
    \item
      \( f_0, f_1 \) determine a functor \( f \colon w \to y \),
  \end{itemize}
  so that the above lifts to a pullback diagram of \(T\)-multicategories.

  The hypothesis that \( p_1, p_2 \) are pullback-stable regular epimorphisms
  implies that \( h_1, h_2 \) are regular epimorphisms. Taking kernel
  pairs and noting that \(T\) preserves them, we get
  \begin{equation*}
    \begin{tikzcd}
      u_1 \ar[r,shift right]
          \ar[r,shift left]
          \ar[d,swap,"d_i"]
      & v_1 \ar[r,"h_1"] 
            \ar[d,"d_i"]
      & w_1 \ar[d,"d_i",dashed] \\
      T^i u_0 \ar[r,shift right]
              \ar[r,shift left]
      & T^i v_0 \ar[r,swap,"T^ih_0"]
      & T^i w_0 
    \end{tikzcd}
  \end{equation*}
  \begin{equation*}
    \begin{tikzcd}
      u_0 \ar[r,shift right]
          \ar[r,shift left]
          \ar[d, swap,"s_0"]
      & v_0 \ar[r,"h_0"] 
            \ar[d,"s_0"]
      & w_0 \ar[d,"s_0",dashed] \\
      u_1 \ar[r,shift right]
          \ar[r,shift left]
      & v_1 \ar[r,swap,"h_1"]
      & w_1 
    \end{tikzcd}
  \end{equation*}
  \begin{equation*}
    \begin{tikzcd}
      u_2 \ar[r,shift right]
          \ar[r,shift left]
          \ar[d,swap,"d_1"]
      & v_2 \ar[r,"h_2"] 
            \ar[d,"d_1"]
      & w_2 \ar[d,"d_1",dashed] \\
      u_1 \ar[r,shift right]
          \ar[r,shift left]
      & v_1 \ar[r,swap,"h_1"]
      & w_1
    \end{tikzcd}
  \end{equation*}
  therefore there exist unique morphisms making every right hand side square
  commute. We note that we define \(h_2(g,f) = (h_1g, (Th_1)f) \). Assuming
  that \(w\) is in fact a \(T\)-multicategory, we may already conclude that
  \(h\) is a functor. The hypothesis that \(p_1,p_2,p_3\) are pullback-stable
  epimorphisms implies that \(h_1,h_2,h_3\) are epimorphisms. We have
  equations
  \begin{align*}
    d_1s_0h_0 &= (Th_0)d_is_0 = (Th_0)e = eh_0 \\
    d_0s_0h_0 &= h_0d_0s_0 = h_0 \\
    d_1d_1h_2 &= (Th_0)d_1d_1 = (Th_0)m(Td_1)d_2 = m(Td_1)d_hk_2 \\
    d_0d_1h_2 &= h_0d_0d_1 = h_0d_0d_0 = d_0d_0h_2 \\
    d_1s_ih_1 &= h_1d_1s_i = h_1s_0d_0 = s_0d_0h_1 \\
    d_1d_2h_3 &= h_1d_1d_2 = h_1d_1d_1 = d_1d_1h_3 
  \end{align*}
  and by cancellation, we conclude \(w\) is a \(T\)-multicategory (proving
  our assumption) and, similarly, we can show that \(f\) is a functor, by
  following the same strategy as in the previous lemma. This confirms that
  \(p^*\) is essentially surjective.

  Finally, it is immediate that \( h \circ \gamma = h \circ \epsilon_{p\circ
  a} \), since \( h_i \circ \gamma_i = h_i \circ \epsilon_{p_i \circ a_i} \)
  for \( i = 0,1 \) and pullbacks are calculated pointwise. The result now
  follows by Corollary \ref{Lemma11}.
\end{proof}

  \section{Epilogue}
    \label{Note on examples}
    There are sparse examples of cartesian monads, and therefore sparse examples
of categories of internal multicategories over a monad. For \( \cat B \)
finitely extensive with finite limits and pullback-stable nested countable
unions, as in \cite[Appendix D]{Lei04}, the free category monad on graphs
internal to \( \cat B \) is cartesian, and therefore so is the free monoid
monad \( \word \) on \( \cat B \). In fact, Leinster's construction is
iterable, and most known examples fit into the above conditions. 

A class of examples outside of the previous setting is given by free monoid
monads on extensive categories with finite limits (thus, trading off the
requirement of the aforementioned unions by infinitary extensivity).  These
are also cartesian; the idea is that the coproduct functor \( \Fam(\cat B) \to
\cat B \) preserves finite limits, so we may construct the required limit
diagrams in \( \Fam(\cat B) \), allowing us to conclude that such monads
preserve pullbacks and that the required naturality squares are pullbacks. 

Given a cartesian monad on a category \( \cat B \) with pullbacks and
\(\intcat C\) an internal \(T\)-multicategory, we can construct a cartesian
monad \(T_{\intcat C}\) on \( \cat B \comma \intcat C_0 \); see Corollary
6.2.5 \textit{ibid}. This yields an equivalence of categories
\begin{equation}
  \label{hammer}
  \Cat(T_{\intcat C}, \cat B \comma \intcat C_0) 
    \iso \Cat(T,\cat B) \comma \intcat C, 
\end{equation}
and since pullback-stable (regular) epimorphisms and effective descent remain
unchanged on slice categories (more precisely, \( \cat C/x \to \cat C \)
creates each of the three types of morphism), we can deduce facts about
effective descent of complicated internal multicategories in terms of
simpler ones.

For the remainder of this section, we will discuss some simple examples of
interest, compare our work with other literature, then mention some open
problems.

\vspace{0.5cm}

\paragraph*{$(M\times -)$\bf{-multicategories}}

Given a monoid \(M\), we can define a cartesian monad \( M \times - \) on \(
\Set \). An \( (M\times -) \)-multicategory \( \intcat C \) is, intuitively, a
category with weighted morphisms. \( (M\times-) \)-morphisms are of the form \(
f \colon x \xrightarrow{m} y \) for objects \(x,y\) and an element \(m\in M
\), and if \( g \colon y \xrightarrow{n} z \), then \( g \circ f \colon x
\xrightarrow{n \cdot m} z \). Identities are given by \( \id \colon x
\xrightarrow{1} x \), and these are to satisfy associativity and identity
laws. 

Despite being a more complicated structure than a category, \((M\times
-)\)-functors of effective descent are not harder to come by compared to
ordinary functors. A well-known result (which can be deduced from
(\ref{hammer})) is that \( \Cat(M\times -,\Set) \iso \Cat \comma M \), where
we view \(M\) as a one object category. Hence, an \( (M\times -) \)-functor is
an effective descent morphism whenever it has the respective property as a
functor. In fact, since \cite{Cre99} characterizes effective descent functors,
we have also characterized effective descent \( (M\times-) \)-functors. The
arguments remain unchanged when we replace \( \Set \) by a lextensive category
\( \cat B \) (with regular epi-mono factorizations for the complete
characterization).

\vspace{0.5cm}
\paragraph*{\bf{Ordinary and operadic multicategories}}

A multicategory \( \intcat C \) consists of sets \( \intcat C_0 \) and \(
\intcat C_1 \) of objects and multimorphisms, respectively, together with
domain and codomain functions \( d_1 \colon \intcat C_1 \to \word \intcat C_0
\), \( d_0 \colon \intcat C_1 \to \intcat C_0 \), together with composition
and unit operations \( d_1 \colon \intcat C_2 \to \intcat C_1 \) and \( s_0
\colon \intcat C_0 \to \intcat C_1 \) satisfying associativity and identity
properties. Here, \( \intcat C_2 \) is the set of multicomposable pairs given
by the pullback of \( d_1 \) and \( Td_0 \). Likewise, \( \intcat C_n \) is
the set of multicomposable \(n\)-tuples.

A multicategory functor \( F \colon \intcat C \to \intcat D \) is given by
a pair of functions on objects and multimorphisms which preserve domain,
codomain, unit and composition. Our main result states that \( F \) is
effective for descent whenever it is surjective on multimorphisms,
multicomposable pairs, and multicomposable triples.

To extend this result using (\ref{hammer}), suppose we have an operad \( \cat
O \) (a multicategory with one object). The induced monad \( \word_{\mathcal
O} \) is said to be an \textit{operadic monad}, which is cartesian. These are
related to \textit{strongly regular theories}; we refer the reader to
\cite{Lei04} and \cite{CJ95} for details. One could denote the category \(
\Cat(\word_{\mathcal O}, \Set) \) as the category of operadic multicategories
and functors between them. These functors come with an underlying
multicategory functor, and is effective for descent in \( \Cat(\word_{\mathcal
O},\Set) \) if and only if it is effective for descent in \( \Cat(\word,\Set)
\). As in the previous case, the same arguments work for \( \cat B \)
lextensive.

\vspace{0.5cm}
\paragraph*{\bf{State of the art}}

Our results have shown that three levels of ``surjectivity'' (of singles,
pairs and triples of multimorphisms) are sufficient to determine effective
descent in generalized multicategories. This is consistent with the findings
of \cite[6.2 Proposition]{JST04} for \( \Cat \), and in \cite[Theorem
6.2.9]{Cre99} for \( \Cat(\cat C) \) where \( \cat C \) has finite limits and
a (regular epi, mono)-factorization, where these three levels are also
necessary.

This is also the case for \(\cat V\)-categories, with \(\cat V\) cartesian, as
verified by \cite[Theorem 9.11]{Luc18} (with suitable \( \cat V \)
lextensive), and \cite[Theorem 5.4]{CH04} (with \( \cat V \) a complete
Heyting lattice). In the latter case, since \( \cat V \) is thin, surjectivity
on triples of morphisms is no longer required.

In the enriched multicategory case, for \( T \) the ultrafilter monad and \(
\cat V = 2 \) (so that \((T,\cat V)\dash \Cat = \Top\)), we have the result of
\cite[Theorem 5.2]{CH02}, which requires only two levels of surjectivity as
well.

\vspace{0.5cm}
\paragraph*{\bf{Further work}} 

We also take the opportunity to state some open problems. One might be
interested in verifying whether the converses to Theorems \ref{first.theorem}
or \ref{main.theorem.2} hold. As mentioned in the introduction, LeCreurer gave
an affirmative answer for \(T=\id\) and further requiring a (regular
epi, mono)-factorization on \( \cat B \). One might also wonder if this extra
condition is necessary.

Another interesting problem is to check whether LeCreurer's tools are also
amenable to fully characterize effective descent morphisms of enriched
categories internal to \( \cat B \).

  \appendix\section{$T$-stability of pullback-stable classes}
    \label{Appendix}
    The purpose of this appendix is to establish a couple of auxiliary lemmas
about preservation of pullback-stable classes. Let \( T = (T,e,m) \) be a
cartesian monad on \( \cat B \).

\begin{lemma}
  \label{create.image.stable}
  \(T\) creates any pullback-stable property of morphisms in its essential image.
\end{lemma}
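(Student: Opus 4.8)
My plan is to read ``\(T\) creates \(\mathcal P\) in its essential image'' as the conjunction of two assertions about a pullback-stable class \(\mathcal P\): that \(T\) reflects \(\mathcal P\) for \emph{every} morphism of \(\cat{B}\), and that \(T\) preserves \(\mathcal P\) once restricted to its essential image; equivalently, for a morphism \(g\iso Tf\) lying in the essential image one has \(g\in\mathcal P\) if and only if \(Tg\in\mathcal P\). Both halves should fall out immediately from the two pullback conditions packaged into the definition of a cartesian monad — the naturality squares of \(e\) and of \(m\) — so once the correct square is identified no construction is needed; in fact pullback-preservation of \(T\) itself is not used, only that these two families of squares are pullbacks. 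Throughout I use that \(\mathcal P\) is closed under isomorphism, as are the classes (almost descent, descent, effective descent) to which the lemma will be applied in Theorem \ref{first.theorem}.

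First I would establish reflection. For an arbitrary \(f\colon a\to b\), the naturality square of the unit \(e\) at \(f\) (horizontal edges \(e_a,e_b\), vertical edges \(f\) and \(Tf\)) is a pullback because \(T\) is cartesian, so \(f\) is, up to isomorphism, the pullback of \(Tf\) along \(e_b\). Pullback-stability of \(\mathcal P\) then gives \(Tf\in\mathcal P\Rightarrow f\in\mathcal P\). I want to stress that this holds for all \(f\), not merely those in the essential image: this global reflection is exactly what later permits the passage from \(Tp_1,\,Tp_2\in\mathcal P\) to \(p_1,\,p_2\in\mathcal P\).

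The second step, preservation on the essential image, is the one that genuinely invokes the multiplication square. Writing \(g\iso Tf\), the naturality square of \(m\) at \(f\) (horizontal edges \(m_a,m_b\), vertical edges \(TTf\) and \(Tf\)) is a pullback, exhibiting \(TTf\) as the pullback of \(Tf\) along \(m_b\). Since \(Tf\iso g\in\mathcal P\), pullback-stability yields \(TTf\in\mathcal P\), and as \(Tg\iso TTf\) we conclude \(Tg\in\mathcal P\). Combined with the reflection step applied to \(g\) (which forces \(Tg\in\mathcal P\Rightarrow g\in\mathcal P\)), this gives the advertised equivalence \(g\in\mathcal P\iff Tg\in\mathcal P\) for every \(g\) in the essential image; in particular, taking \(g=Tp_1\) delivers \(TTp_1\in\mathcal P\) from \(Tp_1\in\mathcal P\).

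I expect the only real obstacle to be conceptual rather than computational: recognising that preservation cannot be had for an arbitrary morphism. The unit square realises \(f\) as a pullback of \(Tf\), which is what powers reflection, but there is no dual square realising \(Tf\) as a pullback of \(f\); the multiplication square helps only once the morphism already has the form \(Tf\), that is, lies in the essential image. This asymmetry is precisely why the statement is confined to the essential image, and the one point demanding care is to carry the isomorphisms \(g\iso Tf\) and \(Tg\iso TTf\) through both arguments, using the iso-closure of \(\mathcal P\) to transport membership across them.
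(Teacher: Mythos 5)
Your proof is correct and takes essentially the same route as the paper: the unit naturality square of \(e\) (a pullback, since \(T\) is cartesian) gives reflection of \(\mathcal P\) along \(T\), and the multiplication naturality square of \(m\) gives \(Tf\in\mathcal P\Rightarrow TTf\in\mathcal P\), i.e.\ preservation on the essential image. The only differences are expository --- you make explicit the iso-closure bookkeeping for morphisms \(g\iso Tf\) and the observation that pullback-preservation of \(T\) itself is never used, both of which the paper's one-line proof leaves implicit.
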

\begin{proof}
  If \( Tf \) satisfies a property \(P\), stable under pullback, then the unit
  and multiplication naturality squares guarantee that \(f\) and \(TTf\) also
  satisfy \(P\).
\end{proof}

\begin{corollary}
  If \(Tf\) is a pullback-stable (regular) epimorphism, effective for descent,
  then \(f\) and \(TTf\) also have the respective property.
\end{corollary}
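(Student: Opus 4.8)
The plan is to observe that each of the three named properties---being a pullback-stable epimorphism, being a pullback-stable regular epimorphism, and being of effective descent---is itself a property stable under pullback, and then to apply Lemma \ref{create.image.stable} directly. Since that lemma asserts that \(T\) creates any pullback-stable property on its essential image, applying it to each of the three properties in turn immediately gives that \(f\) and \(TTf\) inherit the property whenever \(Tf\) possesses it.

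For the first two properties, pullback-stability is built into the terminology: a pullback-stable (regular) epimorphism is by definition one every pullback of which is again a (regular) epimorphism, so the corresponding class of morphisms is closed under pullback, and hence qualifies as a pullback-stable property in the sense of the lemma. The only case requiring an external input is effective descent, where I would invoke the standard fact that effective descent morphisms are stable under pullback in any category with pullbacks; this is what makes the three cases uniform and lets them all be handled by a single appeal to Lemma \ref{create.image.stable}.

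With these observations, the proof reduces to a one-line application of the lemma. I expect no substantive obstacle, since the real content has already been isolated in Lemma \ref{create.image.stable}: its proof transports the property of \(Tf\) to \(f\) and \(TTf\) along the unit and multiplication naturality squares, which are pullbacks precisely because \(T\) is cartesian. The one point to verify with care is that effective descent is genuinely pullback-stable, as---unlike the two epimorphism cases---this does not follow merely from the name.
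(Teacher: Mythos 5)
Your proposal is correct and matches the paper's intended argument exactly: the corollary is stated immediately after Lemma \ref{create.image.stable} with no separate proof precisely because, as you observe, each of the three classes (pullback-stable epimorphisms, pullback-stable regular epimorphisms, effective descent morphisms) is itself pullback-stable, so the lemma applies directly. Your care in flagging that pullback-stability of effective descent morphisms is an external (though standard) fact, rather than a matter of terminology, is a reasonable refinement of the same one-line argument.
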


\begin{lemma}
  \label{lemmaB3}
  Let \( f \colon x \to y \) be a \(T\)-graph morphism, and let \( \cat E \)
  be a class of epimorphisms, containing all retractions, closed under
  composition and cancellation. If \( f_1 \) is in \( \cat E \), then so is
  \(f_0\).
\end{lemma}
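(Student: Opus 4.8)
The plan is to exploit the structure of a $T$-graph morphism, which gives us the commuting relations $d_i \circ f_1 = (T^i f_0) \circ d_i$ for $i = 0, 1$ linking $f_1$ to $f_0$ and $Tf_0$. The key observation is that a $T$-graph carries source and unit data, namely $s_0 \colon x_0 \to x_1$ with $d_0 \circ s_0 = \id$, and this should let us exhibit $f_0$ as a retract of something built from $f_1$, or alternatively factor $f_0$ through $f_1$ in a way that lets the closure properties of $\cat E$ do the work. Concretely, I would first write down the naturality/compatibility square
\begin{equation*}
  \begin{tikzcd}
    x_0 \ar[r,"s_0"] \ar[d,swap,"f_0"] & x_1 \ar[d,"f_1"] \\
    y_0 \ar[r,swap,"s_0"] & y_1
  \end{tikzcd}
\end{equation*}
which commutes because $f$ is a morphism of $T$-graphs (it respects units), and then compose with $d_0$ on both sides.

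The main step is then a retraction argument. Since $d_0 \circ s_0 = \id_{x_0}$ and $d_0 \circ s_0 = \id_{y_0}$, postcomposing the square above by $d_0 \colon y_1 \to y_0$ and using $d_0 \circ f_1 = f_0 \circ d_0$ yields $f_0 = f_0 \circ d_0 \circ s_0 = d_0 \circ f_1 \circ s_0$. This displays $f_0$ as a composite $d_0 \circ (f_1 \circ s_0)$, but more usefully it shows that $f_0$ is a retract of $f_1$ in the arrow category: the pair $(s_0, s_0)$ gives a section and the pair $(d_0, d_0)$ gives a retraction, with $f_1$ in the middle. First I would verify that these two squares genuinely constitute morphisms $f_0 \to f_1$ and $f_1 \to f_0$ in the arrow category whose composite is the identity on $f_0$, and then invoke the hypothesis that $\cat E$ contains all retractions and is closed under the relevant operations.

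The hard part, and the only point requiring care, will be matching the precise closure hypotheses on $\cat E$ to the retraction situation: the statement grants that $\cat E$ contains all retractions, is closed under composition, and is closed under cancellation, so I must decide which of these is actually invoked. If $f_0$ is literally a retract of $f_1$ \emph{in the arrow category}, then membership of $f_0$ in $\cat E$ should follow once we know retracts of $\cat E$-morphisms lie in $\cat E$; the cleanest route is to observe that $f_0$ is a retract of $f_1$ and that a class closed under composition and cancellation and containing retractions is automatically closed under retracts. I expect the genuine obstacle to be purely bookkeeping: confirming the arrow-category squares commute (using only the $T$-graph axioms $d_0 \circ f_1 = f_0 \circ d_0$ and the unit compatibility $f_1 \circ s_0 = s_0 \circ f_0$) and confirming that ``contains retractions, closed under composition and cancellation'' suffices to close $\cat E$ under retracts. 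No deep idea is needed beyond spotting the retract; everything else is formal.
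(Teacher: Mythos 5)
The one step that fails is your justification of the section square: you assert that $f_1 \circ s_0 = s_0 \circ f_0$ commutes ``because $f$ is a morphism of $T$-graphs (it respects units).'' A $T$-graph is a bare span $Tx_0 \xleftarrow{d_1} x_1 \xrightarrow{d_0} x_0$; it carries no unit data, and a $T$-graph morphism is only required to satisfy $d_0 \circ f_1 = f_0 \circ d_0$ and $d_1 \circ f_1 = (Tf_0) \circ d_1$. Compatibility with the sections $s_0$ is precisely the condition that separates a multicategory functor from its underlying graph morphism, and it can fail even when both graphs are reflexive: already for $T = \id$, a graph morphism between the underlying reflexive graphs of two categories need not send identities to identities. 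This is not a pedantic point, because the paper applies the lemma (in the proof of Theorem \ref{first.theorem}) to the pair $(Tp_0, Tp_1)$, which is a priori only a $T$-graph morphism; it does commute with the sections $Ts_0$ there, but only because $p$ is a multicategory functor, not as a consequence of the lemma's hypotheses. As written, your argument therefore proves a weaker statement, with $f_1 \circ s_0 = s_0 \circ f_0$ as an extra hypothesis on $f$.

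The repair is to notice that the section half of your retract is never used. Spelling out your own closure-under-retracts argument in this instance: from the retraction square $f_0 \circ d_0 = d_0 \circ f_1$ (which \emph{is} a graph-morphism axiom), the right-hand side lies in $\cat E$, being the composite of $f_1 \in \cat E$ with the retraction $d_0 \colon y_1 \to y_0$ (using that $\cat E$ contains retractions and is closed under composition); hence $f_0 \circ d_0 \in \cat E$, and cancellation yields $f_0 \in \cat E$ (if your notion of cancellation requires the inner factor to lie in $\cat E$, note that $d_0 \colon x_1 \to x_0$ is also a retraction). This trimmed argument is exactly the paper's one-line proof. What both arguments genuinely need --- and what the bare term ``$T$-graph'' does not supply --- is that $d_0$ admits a section at all; but that is a property of the objects $x$ and $y$, furnished by the (image under $T$ of the) multicategory structure in all of the paper's applications, whereas your version additionally demands a compatibility of $f$ with those sections, which is a genuine extra condition on the morphism.
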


\begin{proof}
  Since \( d_0 \colon x_1 \to x_0 \) is a retraction, \( d_0f_1 = f_0d_0 \) is
  in \( \cat E \), therefore so is \(f_0\) by cancellation.
\end{proof}

We are interested in the cases when \( \cat E \) is the class of
pullback-stable epimorphisms, of descent morphisms and of effective descent
morphisms.

  \section*{Acknowledgements}
    \label{Acknowledgements}
    The authors would like to thank Maria Manuel Clementino for her feedback on
this work. We also thank her for fruitful discussions on effective descent
morphisms during our research stay in Oberwolfach. These discussions
positively influenced the revision of this paper, especially concerning the
statement of Corollary \ref{mmc}.

  % Bibliography

\end{document}